\newtheorem{theorem}{Theorem}
\newtheorem{remark}[theorem]{Remark}
\newenvironment{proof}{\noindent{\em Proof:}}{$\Box$~\\}
\numberwithin{equation}{section} %Numbers equations within each section, which is suggested in the publication guide.
\numberwithin{theorem}{section} %Numbers theorems within each section, which is suggested in the publication guide.
\renewcommand*{\eqref}[1]{\hyperref[{#1}]{\textup{\tagform@{\ref*{#1}}}}} \makeatother %Patches incompatability between hyperref with \eqref 
	\newcolumntype{L}{>{$}l<{$}} % math-mode version of "l" column type
	\newcolumntype{R}{>{$}r<{$}} % math-mode version of "r" column type
	\newcolumntype{C}{>{$}c<{$}} % math-mode version of "c" column type
\def\ps@pprintTitle{%
   \let\@oddhead\@empty
   \let\@evenhead\@empty
   \let\@oddfoot\@empty
   \let\@evenfoot\@oddfoot
}
\begin{document}

\begin{frontmatter}

\title{On Polynomial Solutions of Linear Differential Equations with Applications}

\author{Kyle R. Bryenton$^{1,*}$} \ead{kyle.bryenton@dal.ca} 
\author{Andrew R. Cameron$^{2}$} \ead{ar3camer@uwaterloo.ca}
\author{Keegan L. A. Kirk$^{1}$} \ead{k4kirk@uwaterloo.ca}
\author{Nasser Saad$^{1}$} \ead{nsaad@upei.ca}
\author{Patrick Strongman$^{2}$} \ead{patrick.strongman@dal.ca}
\author{Nikita Volodin$^{1}$} \ead{nikita.volodin@uwaterloo.ca}
\address{$^{1}$ School of Mathematical and Computational Sciences, University of Prince Edward Island, Charlottetown, PE C1A 4P3, Canada}
\address{$^{2}$ Department of Physics, University of Prince Edward Island, Charlottetown, PE C1A 4P3, Canada}

\begin{abstract}
The analysis of many physical phenomena can be reduced to the study of solutions of differential equations with polynomial coefficients. In the present work, we establish the necessary and sufficient conditions for the existence of polynomial solutions to the linear differential equation  
\begin{equation*}
\sum_{k=0}^{n} \alpha_{k} \, r^{k} \, y''(r) + \sum_{k=0}^{n-1}  \beta_{k} \, r^{k} \, y'(r) - \sum_{k=0}^{n-2} \tau_{k} \, r^{k} \, y(r) = 0 \, ,
\end{equation*} 
for arbitrary $n\geq 2$. We show by example that for $n \ge 3$, the necessary condition is not enough to ensure the existence of the polynomial solutions. Using Scheff\'{e}'s criteria, we show that from this differential equation there are $n$-generic equations solvable by a two-term recurrence formula. The closed-form solutions of these generic equations are given in terms of the generalized hypergeometric functions. For the arbitrary $n$ differential equations, three elementary theorems and one algorithm were developed to construct the polynomial solutions explicitly. The algorithm is used to establish the polynomial solutions in the case of $n=4$. To demonstrate the simplicity and applicability of this approach, it is used to study the solutions of Heun and Dirac equations. 
\end{abstract}

\begin{keyword}
Symbolic computation; Polynomial solutions; Scheff\'{e} criteria; Heun equation; Dirac equation; Method of Frobenius; Recurrence relations.\\
\vspace*{0.2cm}
\textit{PACS numbers:} 01.50.hv; 02.30.Hq; 02.30.Lt; 02.30.Mv; 02.70.-c; 02.70.Wz; 03.65.-w; 03.65.Ge; 03.65.Pm; 07.05.Tp.
\end{keyword}

\end{frontmatter}

\section{Introduction} \label{Sec1}
Differential equations with polynomial coefficients have played an important role not only in understanding engineering and physics problems \cite{sa2014,zh2012,wk2014,ra2007,mh2011,ci2010,mf2018,ch2014,ch2015,ch2016,ish2015,ish2018,ish2018b,li2016,fe2017,ch2002,ca2014,ha2010,ma1985,ha2011,ha2011b,ov2012,ex1991,qd2014,qd2016,qd2018,qd2018a,qd2019,qd2019a}, but also as a source of inspiration for some of the most important results in special functions and orthogonal polynomials 
\cite{eu1769,ro1884,bo1929,sc1941,ss1964,ha1978,li1982,ll1984,sa1990,at1995,kl1998,ek2001,ro1995b,ma2007,ja2008,bl2012}. The classical differential equation
\begin{equation}\label{eq1.1}
\big( \alpha_{2} \, r^{2} + \alpha_{1} \, r + \alpha_{0} \big) \, y''(r) + \big( \beta_{1} \, r + \beta_{0} \big) \, y'(r) - \tau_{0} \, y(r) = 0 \, , 
\end{equation}
for example, has been a cornerstone of many fundamental results in special functions since the early work of L. Euler \cite{eu1769}, see also \cite{sw2000,ro1884,bo1929}. More recently, the Heun-type equation \cite{ro1995b,ma2007,ja2008}
\begin{equation}\label{eq1.2}
r \, \big( \alpha_{3}\, r^{2}+ \alpha_{2} \, r + \alpha_{1} \big) \, y''(r) + \big( \beta_{2}\, r^{2}+ \beta_{1} \, r + \beta_{0} \big) \, y'(r) - \big( \tau_{1}\, r+ \tau_{0} \big) \, y(r) = 0 \, , 
\end{equation}
has become a classic equation in mathematical physics. These two equations are members of the general differential equation
\begin{align}\label{eq1.3}
&P_{n}(r) \, y''(r) + Q_{n-1}(r) \, y'(r) - R_{n-2}(r) \, y(r) = 0 \, , \qquad n\geq 2 \, , \notag \\
&P_{n}(r)=\sum_{k=0}^{n} \alpha_{k}\, r^{k} \, , \quad Q_{n-1}(r)= \sum_{k=0}^{n-1} \beta_{k}\, r^{k} \, , \quad R_{n-2}(r)=\sum_{k=0}^{n-2} \tau_{k}\, r^{k} \,,
\end{align}
where the leading term of $R_{n-2} \neq 0$ and at least one of the leading terms $\alpha_{n}$ and $\beta_{n-1}$ is not zero. The purpose of the present work is to provide an answer to the following question:

\vspace{12pt}

\textit{``Under what conditions on the equation parameters $\alpha_{k}$, $\beta_{k}$, and $\tau_{k}$, $k=0,\,1,\,\ldots,\,n$, does the differential equation \eqref{eq1.3}
 have polynomial solutions? If it does have polynomial solutions, how can we construct them?''}\\

With the general theory of linear differential equations as a guide \cite{in1956,fo1933}, the zeros of the leading polynomial $P_{n}(r)$ classify the solutions of the equation \eqref{eq1.3}. The point $r = \zeta$ is called an ordinary point if $Q_{n-1}(r) / P_{n}(r)$ and $R_{n-2}(r) / P_{n}(r)$ are analytic functions at $r = \zeta$, or a regular singular point if $Q_{n-1}(r) / P_{n}(r)$ and $R_{n-2}(r) / P_{n}(r)$ are not analytic at $r = \zeta$ but the products $(r-\zeta) \, Q_{n-1}(r) / P_{n}(r)$ and $(r-\zeta)^{2} \, R_{n-2}(r) / P_{n}(r)$ are analytic at that point. Using this characterization of singularities, S. Bochner \cite{bo1929} classified the polynomial solutions of \eqref{eq1.1} in terms of the classical orthogonal polynomials. A different approach which depends on the parameters of the leading coefficient $P_{n}(r)$, was introduced in \cite{sa2014} to study the polynomial solutions of \eqref{eq1.1}. It was shown that there are seven possible nonzero leading polynomials depending on the combination of the nonzero $\alpha_{i}$ parameters, for $i=0,\,1,\,2$. By analyzing each of these cases, the authors \cite{sa2014} were able to explicitly construct all the polynomial solutions of equation \eqref{eq1.1} in terms of hypergeometric functions along with the associated weight functions. 
\vskip0.1true in
For the $n^{th}$ degree leading polynomial coefficient $P_{n}(r)$ of the differential equation \eqref{eq1.3} contains $(2^{n+1}-1)$ nonzero polynomials that depend on the nonzero values of $\alpha_{i}$, $i=0,\,1,\,\ldots\,,n$. Out of this polynomial set, there are $2^{n}$ differential equations for which $r=0$ is an ordinary point, $(2^{n-1} +2^{n-2})$ differential equations for which $r=0$ is a regular singular point, and the remaining $(2^{n-2}-1)$ differential equations with irregular singular points that fall outside of the scope of this present work.\\

The criteria for polynomial solutions of second-order linear differential equation \eqref{eq1.3} was introduced, using the Asymptotic Iteration Method, in \cite{sa2003,sa2005}:
\begin{theorem}\label{Thm1} \cite{sa2003,sa2005} Given $\lambda_{0}\equiv\lambda _{0}(x)$ and $s_{0}\equiv s_{0}(x)$ in $C^{\infty}$, the differential equation $y''=\lambda_{0}(x) y'+s_{0}(x) y$ has the general solution
\begin{equation} \label{eq1.4}
y=\exp \left( -\int\limits^{x}\alpha (t)dt\right) \left[ C_{2}+C_{1}\int\limits^{x}\exp \left( \int\limits^{t}\left( \lambda _{0}(\tau )+2\alpha(\tau )\right) d\tau \right) dt\right]
\end{equation}
if, for some positive integer $m>0$, 
\begin{equation} \label{eq1.5}
\frac{s_{m}}{\lambda _{m}}=\frac{s_{m-1}}{\lambda _{m-1}}=\alpha (x) \, , \quad \text{or} \quad \delta _{m}(x)=\lambda _{m}s_{m-1}-\lambda _{m-1}s_{m}=0 \, ,
\end{equation}
where $\lambda _{m}=\lambda _{m-1}^{\prime }+s_{m-1}+\lambda_{0}\lambda _{m},$ and $s_{m}=s_{m-1}^{\prime }+s_{0}\lambda _{m}$. Further, the differential equation has a polynomial solution of degree $m$, if 
\begin{equation} \label{eq1.6}
\delta _{m}(x)=\lambda _{m}s_{m-1}-\lambda _{m-1}s_{m}=0.
\end{equation}
\end{theorem}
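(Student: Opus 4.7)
The plan is to proceed in three steps: establish an iterated derivative identity, exploit the termination hypothesis to reduce the second-order ODE to a first-order linear equation, and then address the polynomial claim separately. The central identity I would set up is
\[
y^{(n+2)} = \lambda_n\, y' + s_n\, y, \qquad n \geq 0,
\]
valid for any solution $y$ of $y'' = \lambda_0 y' + s_0 y$. The proof is by induction on $n$: the base case $n=0$ is the ODE itself, and the inductive step follows by differentiating both sides and re-substituting $y'' = \lambda_0 y' + s_0 y$, which produces the recurrences $\lambda_n = \lambda_{n-1}' + s_{n-1} + \lambda_0\lambda_{n-1}$ and $s_n = s_{n-1}' + s_0 \lambda_{n-1}$ as in the statement (reading the final factors as $\lambda_{n-1}$ in place of the apparent $\lambda_n$, a typographical correction).

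Applying the identity at levels $m-1$ and $m$ under the hypothesis $s_{m-1}/\lambda_{m-1} = s_m/\lambda_m = \alpha$, I would introduce $z := y' + \alpha\, y$ and observe that $y^{(m+1)} = \lambda_{m-1}\, z$ and $y^{(m+2)} = \lambda_m\, z$. Differentiating the first relation and equating with the second yields a separable first-order ODE for $z$,
\[
\frac{z'}{z} \;=\; \frac{\lambda_m - \lambda_{m-1}'}{\lambda_{m-1}} \;=\; \alpha + \lambda_0,
\]
where the last equality uses the $\lambda$-recursion together with $s_{m-1} = \alpha\lambda_{m-1}$. Quadrature gives $z = C_1\exp\!\left(\int^t(\lambda_0+\alpha)\,d\tau\right)$. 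The resulting first-order linear ODE $y' + \alpha\, y = z$ then yields formula \eqref{eq1.4} upon multiplication by the integrating factor $\exp\!\left(\int\alpha\,dt\right)$, a single further integration, and retention of the homogeneous contribution as $C_2$.

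For the polynomial claim, necessity is immediate from the iteration: if $y$ has degree $m$, then $y^{(m+1)}\equiv 0$ forces $\lambda_{m-1}y' + s_{m-1}y\equiv 0$; differentiating once more and invoking the identity at level $m$ gives $\delta_m = \lambda_m s_{m-1}-\lambda_{m-1}s_m = 0$. Sufficiency is harder: the branch $C_1=0$ of \eqref{eq1.4} reads $y = C_2\exp(-\int\alpha\,dt)$, and one must argue that this quadrature is the logarithm of a polynomial of degree exactly $m$. I expect this to be the main obstacle, because the closed form does not manifest the polynomial character. I would complete the argument by inserting the ansatz $y = \sum_{k\geq 0} a_k x^k$ into the target equation \eqref{eq1.3} to extract a finite coefficient recurrence, and then verify that the earliest vanishing of $\delta_m$ at index $m$ is equivalent to truncation of the power series at $a_m \neq 0$, $a_{m+1}=0$, leaving a genuine polynomial solution.
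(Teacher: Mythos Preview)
The paper does not supply its own proof of this theorem; it is quoted from the cited references on the Asymptotic Iteration Method and used only as background. Your derivation is essentially the standard one from those sources and is correct for formula \eqref{eq1.4}: the inductive identity $y^{(n+2)}=\lambda_n y'+s_n y$, the substitution $z=y'+\alpha y$, the computation $z'/z=(\lambda_m-\lambda_{m-1}')/\lambda_{m-1}=\lambda_0+\alpha$ via the $\lambda$-recursion, and the final integrating-factor quadrature all go through exactly as you describe, including your typographical correction of $\lambda_m$ to $\lambda_{m-1}$ in the recursions.

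On the polynomial clause you correctly identify the genuine subtlety. Necessity is fine as you argue: if $\deg y=m$ then $y^{(m+1)}=y^{(m+2)}=0$ forces $\lambda_{m-1}y'+s_{m-1}y=0$ and $\lambda_m y'+s_m y=0$, whence $\delta_m y=0$ and so $\delta_m\equiv 0$. Sufficiency, however, cannot hold for arbitrary $C^\infty$ data: $\exp(-\!\int\alpha)$ has no reason to be polynomial in general. In the AIM literature the criterion \eqref{eq1.6} is applied to equations with polynomial (or rational) coefficients, where $\delta_m(x)$ is itself a polynomial in $x$ and its identical vanishing imposes finitely many algebraic constraints on the equation parameters; under those constraints the series truncates. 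Your proposed completion---substituting the ansatz $y=\sum_k a_k x^k$ into \eqref{eq1.3} and reading off the truncation conditions from the coefficient recurrence---is precisely the mechanism the present paper develops independently in Theorems~\ref{Thm4.1}--\ref{Thm4.3}, so that is the right way to close the argument in this setting.
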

With $\lambda_{0}(r)= - Q_{n-1}(r)/P_{n}(r)$ and $s_{0}(r)= R_{n-2}(r)/P_{n}(r)$, a simple algorithm based on Theorem \ref{Thm1} can be used to examine the polynomial solutions of \eqref{eq1.3}. 
\vskip0.1true in
\noindent The present work independently from theorem \ref{Thm1} establish these polynomial solutions along with the required conditions for the existences of these solutions in simple and constructive forms. The paper is organized as follows: Section 2 introduces the concept of a necessary but not sufficient condition to find polynomial solutions, and then demonstrates how the inverse square-root potential \cite{li2016,fe2017, ish2018, ish2018b} is one such example that does not have polynomial solutions yet satisfies the necessary condition. In Section 3, Scheff\'{e} criteria is revised to analyze equation \eqref{eq1.3} and to generate exactly solvable equations where the coefficients of their power series solutions are easily computed using a two-term recurrence relation. The closed form solutions of these equations in terms of the generalized hypergeometric functions are also given. In Section 4 we present three theorems corresponding to the cases: (1) $\alpha_{0} \neq 0$, (2) $\alpha_{0} = 0$, $\alpha_{1} \neq 0$, and (3) $\alpha_{0} = \alpha_{1} = \beta_{0} = 0$, $\alpha_{2} \neq 0$ which are required to establish the polynomial solutions of equation \eqref{eq1.3}. We shall show that for the $m^{th}$-degree polynomial solutions, there are $n+m-1$ conditions that ultimately assemble these polynomials. At the end of the section we briefly discuss the Mathematica\textsuperscript{\textregistered} program used to generate the solutions of these equations. Lastly, Section 5 demonstrates the validity of these constructions through the application of our results to some problems that have appeared in mathematics and physics literature. \\

%%%%%%%%%%%%%%%%%%%%%%%%%%%%%%%%
\section{A Necessary But Not Sufficient Condition} \label{Sec2}
%%%%%%%%%%%%%%%%%%%%%%%%%%%%%%%%
We begin by stating the necessary condition for the existence of polynomial solutions to the general differential equation given by \eqref{eq1.3}:
\begin{theorem}\label{Thm2.1}
A necessary condition for the differential equation \eqref{eq1.3} to have a polynomial solution of degree $m$ is
\begin{equation}\label{eq2.1}
\tau_{n-2} = \alpha_{n} \, m \, (m-1) + \beta_{n-1} \, m \, , \qquad n = 2 , \, 3 , \, \ldots \, .
\end{equation}
\end{theorem}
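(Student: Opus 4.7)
The plan is to substitute a polynomial ansatz of degree $m$ into equation \eqref{eq1.3} and extract the coefficient of the highest power of $r$ that appears on the left-hand side. Since we seek a necessary condition only, one equation obtained by matching the top-degree term will suffice.

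Write $y(r) = \sum_{j=0}^{m} c_{j}\, r^{j}$ with $c_{m} \neq 0$. First I would identify the degree of the expression $P_{n}(r)\,y''(r) + Q_{n-1}(r)\,y'(r) - R_{n-2}(r)\,y(r)$. The product $P_{n}(r)\,y''(r)$ has degree at most $n + (m-2) = n+m-2$; the product $Q_{n-1}(r)\,y'(r)$ has degree at most $(n-1) + (m-1) = n+m-2$; and $R_{n-2}(r)\,y(r)$ has degree at most $(n-2) + m = n+m-2$. Thus, remarkably, all three contributions can simultaneously contribute to the coefficient of $r^{n+m-2}$, and one must extract exactly this coefficient.

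Next I would compute the contribution of $c_{m}\, r^{m}$ to each of the three terms at the top degree: $\alpha_{n}\, r^{n}$ acts on $c_{m}\, r^{m}$ under $y''$ to give $\alpha_{n}\, m(m-1)\, c_{m}\, r^{n+m-2}$; $\beta_{n-1}\, r^{n-1}$ acts on $c_{m}\, r^{m}$ under $y'$ to give $\beta_{n-1}\, m\, c_{m}\, r^{n+m-2}$; and $\tau_{n-2}\, r^{n-2}$ multiplied by $c_{m}\, r^{m}$ contributes $\tau_{n-2}\, c_{m}\, r^{n+m-2}$. Lower-index coefficients $c_{j}$ with $j<m$ cannot contribute to $r^{n+m-2}$, because in each of the three terms one arrives at a strictly lower power of $r$. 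Setting the sum of these contributions to zero and dividing by $c_{m} \neq 0$ yields exactly \eqref{eq2.1}.

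I do not anticipate any substantive obstacle; the only point that requires minor care is confirming that no lower-order monomials $c_{j}\,r^{j}$, $j<m$, can produce the term $r^{n+m-2}$, which follows by a direct degree count in each of the three products. The argument shows the condition is necessary, and the remark in Section 2 that it is not sufficient for $n \geq 3$ confirms that no converse can be read off from this top-coefficient computation alone.
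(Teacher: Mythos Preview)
Your proposal is correct and follows essentially the same approach as the paper's proof: substitute the polynomial ansatz into \eqref{eq1.3}, collect like powers of $r$, and read off the top-degree coefficient to obtain \eqref{eq2.1}. Your version is in fact more explicit than the paper's, which simply states that after collecting terms the condition for $\mathcal{C}_m\neq 0$ follows, whereas you spell out the degree count verifying that only $c_m$ contributes at power $r^{n+m-2}$.
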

\begin{proof}
Substitute the polynomial solution $y=\sum_{i=0}^{m} \mathcal{C}_{i} \, r^{i}$ into the differential equation \eqref{eq1.3}. After collecting the common terms of the common $r$ power, the necessary condition for the leading coefficient $\mathcal{C}_{m}$ to be nonzero is $\tau_{n-2} = \alpha_{n} \, m \, (m-1) + \beta_{n-1} \, m$ for all $m\geq 0$ follows.
\end{proof}

For $n=2$, equation \eqref{eq2.1} represents both the necessary and sufficient conditions for the polynomial solutions of \eqref{eq1.3}. Although the condition \eqref{eq2.1} is necessary, it is not sufficient for $n \ge 3$ to guarantee the polynomial solutions of \eqref{eq1.3}. There are precisely $n-2$ additional conditions that are sufficient to ensure the existence of such polynomials. These additional conditions can be understood as constraints that relate the remaining parameters of $R_{n-2}(r)$ with the coefficients $\{ \alpha_{k} \}_{k=0}^{n}$ and $\{ \beta_{k} \}_{k=0}^{n-1}$ in $P_{n}(r)$ and $Q_{n-1}(r)$, respectively. In later sections, we shall devise a procedure to find these sufficient conditions and provide a method of evaluating the corresponding polynomial solutions explicitly. \\

The remainder of the section investigates the inverse square-root potential. We shall show that despite satisfying the necessary condition in Theorem \ref{Thm2.1} that the Schr\"{o}dinger equation with this potential as mentioned in the literature has no polynomial solutions, thus demonstrating that the condition \eqref{eq2.1} is necessary but not sufficient. 
%%%%%%%%%%%%%%%%%%%%%%%%%%
\subsection{The Inverse Square-Root Potential}
%%%%%%%%%%%%%%%%%%%%%%%%%%%%%%
The exact solutions of the radial Schr\"{o}dinger equation
\begin{equation}\label{eq2.2}
-\dfrac{d^{2} \, \psi(r)}{dr^{2}}+\left(-\frac{v}{\sqrt{r}}+\dfrac{l \, (l+1)}{r^{2}}\right) \psi(r) = E \, \psi(r) \, , \qquad v>0 \, , \quad r>0 \, , \quad \psi(0)=\psi(\infty)=0 \, ,
\end{equation}
were recently studied by a number of authors \cite{ish2015,li2016,fe2017,ish2018,ish2018b}. The differential equation \eqref{eq2.2} has two singular points, one regular at $r=0$ and another irregular at $r=\infty$ of rank $3$ \cite{in1956}. To analyze these points further, we employ a change of variables $z=\sqrt{2 \,\kappa \, r}$, for some constant $\kappa$ to be determined shortly. Equation \eqref{eq2.2} may then be expressed as
\begin{equation}\label{eq2.3}
z^{2} \,\psi''(z)-z\,\psi'(z)+\left(\left(\frac{\sqrt{2} \, v}{\kappa^{3/2}}+\frac{E}{\kappa^{2}}z\right) z^{3}-4 \, l \, (l+1)\right)\psi(z)=0 \, , \qquad \psi(0)=\psi(\infty)=0 \, .
\end{equation}
As $z\to 0$, the solution to the differential equation \eqref{eq2.3} behaves asymptotically as the solution of the Euler equation, $z^{2} \, \psi''(z) - z \, \psi'(z) - 4 \, l \, (l+1) \, \psi(z) = 0 $, which has a physically acceptable solution $\psi(z) \sim z^{2 \, (l+1)}$. Meanwhile as $z \to \infty$, the solution to the differential equation behaves asymptotically as the solution of $\psi''(z)+\left(\sqrt{2} \, v / \kappa^{3/2} - z\right) \, z \, \psi(z) = 0$.
To examine the solution of this equation we complete the square that yields
\begin{equation}\label{eq2.4}
-\psi''(z)+\left(\frac{v}{\sqrt{2 \, \kappa^{3}}}-z\right)^{2} \, \psi(z)=\left(\frac{v}{\sqrt{2 \, \kappa^{3}}} \right)^{2} \, \psi(z) \,.
\end{equation}
The substitution $x=\frac{v}{\sqrt{2 \, \kappa^{3}}}-z$ allow us to compare the solution of the equation to the solvable Schr\"odinger equation with the harmonic oscillator potential, as $z\to\infty$, via
\begin{equation}\label{eq2.5}
\psi(z)=\exp\left(-\int^{z} (u+\lambda) \, du \right) = \exp\left(-\left(\frac{z^{2}}{2}+\lambda \, z\right)\right),\quad \lambda= -\frac{v}{\sqrt{2 \, \kappa^{3}}} \, .
\end{equation}
Hence, we may assume the solution of equation \eqref{eq2.3} to be
\begin{equation}\label{eq2.6}
\psi(z) = N \, z^{2 \,(l+1)}\,\exp\left(-\left(\dfrac{z^{2}}{2}+\lambda z\right)\right)\,f(z) \, , \qquad \lambda= -\frac{v}{\sqrt{2 \, \kappa^{3}}}<0 \, ,
\end{equation}
up to a normalization constant $N$. Upon substituting the ansatz solution \eqref{eq2.6} into equation \eqref{eq2.3}, we obtain a second-order differential equation (an example of a biconfluent Heun equation \cite{ro1995b}) for $f(z)$ as
\begin{equation}\label{eq2.7}
z \, f''(z)-\left[2 \, z^{2} + 2 \, \lambda \, z-3-4 \, l \, \right] f'(z) + \Big[ \left(\lambda^{2}-4 \, (l+1) \right) z - (3+4 \, l) \, \lambda \Big] \, f(z) = 0 \, ,
\end{equation}
subject to $ \kappa = \sqrt{-E}$. Clearly, equation \eqref{eq2.7} is of the form of equation \eqref{eq1.3} with $n = 3$ given $\alpha_{3} = \alpha_{2} = 0$, $\alpha_{1} = 1$, $\beta_{2} = -2$, $\beta_{1} = -2 \, \lambda$,  $\beta_{0} = 3+4 \, l$, $\tau_{1} = -\lambda^{2}+4 \,(l+1)$, and $\tau_{0} = (3+4 \, l) \, \lambda$. Using Theorem \ref{Thm2.1}, for the existence of an $m^{th}$-degree polynomial solution it is necessary that 
\begin{equation}\label{eq2.8}
\lambda^{2}- 4 \, (l+1) = 2\,m \, , \qquad m = 0, \, 1, \, 2, \, \ldots \, .
\end{equation}
Note that this result can be deduced using Bethe Ansatz method \cite{zh2012,shi2011}. On the other hand, an application of the Frobenius method establishes the following three-term recurrence relation for the coefficients of the polynomial solutions $f_{m}(r) = \sum_{j=0}^{m} \mathcal{C}_{j} \, r^{j}$:
\begin{equation}\label{eq2.9}
(j+1) \left(j+3+4 \, l \right)  \mathcal{C}_{j+1} - \lambda \left(2 \, j+3+4 \, l \right) \mathcal{C}_{j} - \left(2 \, j- \lambda^{2}+4 \, l+2 \right) \mathcal{C}_{j-1} = 0 \, ,
\end{equation} 
where $j = 0, \, 1, \, 2, \, \ldots ,\, m, \, m+1$ and $C_{j}=0$ for $j<0$. Implementing the necessary condition \eqref{eq2.8}, with $\lambda=-\sqrt{4+4l+2m}<0$, equation \eqref{eq2.9} then reads  
\begin{equation}\label{eq2.10}
(j+1)\left(j+3+4 \, l\right) \mathcal{C}_{j+1} +\sqrt{4+4 l+2m} \, \big(2 \, j+3+4 \, l \,\big)\, \mathcal{C}_{j} +2 \big( m- \, j+1 \big) \, \mathcal{C}_{j-1} = 0 \, .
\end{equation}
The three-term recurrence relation given by equation \eqref{eq2.10} enforces the vanishing of the determinant, denoted by $\Delta_{(m+1)}$, that gives the sufficient condition
\begin{center}
\begin{minipage}{0.494\textwidth}
\begin{equation*}
\Delta_{m+1} = \left| \begin{array}{ccccccc}
S_{0} & T_{1} &&&&& \\
\gamma_{1} & S_{1} & T_{2} &&&& \\
& \gamma_{2} & S_{2} & T_{3} &&& \\
&& \ddots & \ddots & \ddots &&\\ 
&&& \gamma_{m-2} & S_{m-2} & T_{m-1} & \\
&&&& \gamma_{m-1} & S_{m-1} & T_{m} \\
&&&&& \gamma_{m} & S_{m} \\
\end{array} \right| \, ,
\end{equation*}
\end{minipage}
\begin{minipage}{0.5\textwidth}
\begin{align}
\text{for} \notag\\
S_{j} &= (2 \, j+3+4 \, l)\, \sqrt{4+4 l+2m} \, , \notag\\
T_{j} &= j(j+2+4 \, l) \, , \notag\\
\gamma_{j} &= 2(m-j+1) \, . \label{eq2.11}
\end{align}
\end{minipage}
\end{center}
For any nonzero value of the determinant $\Delta_{m+1}$, it will be  indication that a polynomial solution is not possible.

\begin{remark} The point here is not solving the Schr\"{o}dinger equation with the inverse-root potential; this is already done in several manuscripts \cite{ish2015,li2016,fe2017,ish2018,ish2018b}. We stress that the necessary condition is not enough to guarantee polynomial solutions. Also, the necessary and sufficient constraints must have common roots; however, at least some of these roots must belong to the physically interpreted region.
\end{remark}

\begin{remark} In \cite{ro1974}, the author claims the existence of polynomial solutions of a differential equation which matches \eqref{eq2.7} that satisfy the necessary condition \eqref{eq2.1}. The author missed the possibility of the non-vanishing determinant (4), in his work, for some $\beta_{0}$ and $\beta_{2}$.
\end{remark}

\begin{remark} For physics applications, there is nothing special about the necessity of the polynomial solutions. However, the problem of the existence of polynomial solutions is important. Studying the problem in its general form and developing mathematical tools to treat it is significant on its own.
\end{remark}

%%%%%%%%%%%%%%%%%%%%%%%%%%%%%%%%
\section{Scheff\'{e}'s Criteria: Two-Term Recurrence Relation} \label{Sec3}
%%%%%%%%%%%%%%%%%%%%%%%%%%%%%%%%
Generally speaking, recurrence relations with more than two terms are difficult to solve explicitly. Differential equations that are known to have a two-term recurrence relation for their power series solutions guarantee the solvability of such equations. In a paper presented to the American Mathematical Society (1941), H. Scheff\'{e} \cite{sc1941} establishes criteria for the necessary and sufficient conditions for differential equations of the form
\begin{equation}\label{eq3.1}
p_{2}(r)\, u''(r)+p_{1}(r)\, u'(r)+p_{0}(r)\, u(r)=0 \, ,
\end{equation}
to have a two-term recurrence relation. Here $p_{j}(r)\neq 0,~j=0,1,2,$ are analytic functions (not necessarily polynomials) in some region consisting of all points in an arbitrary neighbourhood of a regular singular point $r = r_{0}$ except the point $r_{0}$ itself. We adopt this criterion to examine the differential equation \eqref{eq3.1} with 
\begin{equation} \label{eq3.2}
p_{2}(r)=\sum_{k=0}^{n} \alpha_{k}\, r^{k} \, , \quad p_{1}(r)= \sum_{k=0}^{n-1} \beta_{k}\, r^{k} \, , \quad p_{0}(r)=\sum_{k=0}^{n-2} \gamma_{k}\, r^{k} \, , \quad n\geq 2 \, , 
\end{equation}
and provide a formula for the two-term recurrence relation. Without loss of generality, we shall take the ordinary or singular point as $r=0$ otherwise a simple shifting of $r$ is applied first.

\begin{theorem} \label{Thm3.1} The necessary and sufficient conditions for the  differential equation \eqref{eq3.1} to have a two-term recurrence relationship between successive coefficients in its series solution, relative to the ordinary or regular singular point $r=0$, is that in the neighbourhood of $r=0$ the equation \eqref{eq3.1} can be written as:
\begin{align}\label{eq3.3}
[q_{2,0}+q_{2,h}\,r^h]&\,r^{2-m}\, u''(r)+[q_{1,0}+q_{1,h}\,r^h]\,r^{1-m}\, u'(r)+[q_{0,0}+q_{0,h}\,r^h]\, r^{-m}\, u(r)=0 \, , 
\end{align}
where, for $m\in \mathbb Z$ , $h\in \mathbb Z^+$,
\begin{equation}\label{eq3.4}
q_{j}(r) \equiv \sum_{k=0}^\infty q_{j,k}r^{k}= p_{j}(r)\,r^{m-j} \, , \qquad j=0,1,2\, ,
\end{equation}
and at least one of the product $q_{j,0} \, q_{i,h}$, for $i,\,j=0,\,1,\,2 \ldots$, is not zero.  The two-term recurrence formula is given by
\begin{align}\label{eq3.5}
c_{k}=- \dfrac{q_{2,h}\, (k+\lambda-h)(k+\lambda-h-1)+q_{1,h}\, (k+\lambda-h) +q_{0,h}}{q_{2,0}\, (k+\lambda)(k+\lambda-1)+q_{1,0}\, (k+\lambda) +q_{0,0}}c_{k-h} \, ,\qquad c_{0}\neq 0 \, , 
\end{align}
where $\lambda=\lambda_{1} \, ,\lambda_{2}$ are the roots of the indicial equation
\begin{equation}\label{eq3.6}
q_{2,0}\,\lambda\,(\lambda-1)+q_{1,0}\, \lambda +q_{0,0}=0 \, .
\end{equation}
The closed form of the series solution can be written in terms of the generalized hypergeometric function as
\begin{align}\label{eq3.7}
u&(r;\lambda)=z^\lambda \sum_{k=0}^\infty c_{hk}r^{hk}\notag\\
&=r^\lambda\, _{3}F_{2}\left(1,\frac{2\lambda-1}{2h}+\frac{q_{1,h}}{2\,h\,q_{2,h}}-\frac{\sqrt{(q_{1,h}-q_{2,h})^{2}-4 q_{0,h} q_{2,h}}}{2\, h\,q_{2,h}} \, ,\right.\frac{2\lambda-1}{2h}+\frac{q_{1,h}}{2\,h\,q_{2,h}}+\frac{\sqrt{(q_{1,h}-q_{2,h})^{2}-4 q_{0,h} q_{2,h}}}{2\, h\, q_{2,h}};\notag\\
&1+\frac{2 \lambda-1}{2\, h}+\frac{q_{1,0}}{2\, h\, q_{2,0}}-\frac{\sqrt{(q_{1,0}-q_{2,h})^{2}-4 q_{0,0} q_{2,0}}}{2\, h\, q_{2,0}} \, ,\left. 1+\frac{2 \lambda-1}{2 h}+\frac{q_{1,0}}{2\, h\, q_{2,0}}+\frac{\sqrt{(q_{1,0}-q_{2,h})^{2}-4 q_{0,0} q_{2,0}}}{2\, h\, q_{2,0}};-\frac{q_{2,h}}{q_{2,0}}\,r^h\right).
\end{align}
\end{theorem}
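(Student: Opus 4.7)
The strategy is direct Frobenius substitution together with a matching argument. Write a tentative series $u(r) = r^{\lambda} \sum_{k \ge 0} c_{k}\, r^{k}$ with $c_{0} \neq 0$ and use the falling factorial $(x)^{\underline{j}} := x(x-1)\cdots(x-j+1)$ to express $u^{(j)}(r) = \sum_{k \ge 0} (k+\lambda)^{\underline{j}}\, c_{k}\, r^{k+\lambda-j}$. If the equation has the form \eqref{eq3.3}, then the coefficient $p_{j}(r) = r^{j-m}(q_{j,0} + q_{j,h}\, r^{h})$ multiplied by $u^{(j)}$ produces contributions at exactly two shifted exponents, $r^{k+\lambda-m}$ and $r^{k+\lambda-m+h}$. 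Summing over $j \in \{0,1,2\}$ and collecting the coefficient of $r^{N+\lambda-m}$ gives
\begin{equation*}
\Bigl[\sum_{j=0}^{2} q_{j,0}\,(N+\lambda)^{\underline{j}}\Bigr] c_{N} + \Bigl[\sum_{j=0}^{2} q_{j,h}\,(N-h+\lambda)^{\underline{j}}\Bigr] c_{N-h} = 0 .
\end{equation*}
Setting $N=0$ with the convention $c_{-h}=0$ yields the indicial equation \eqref{eq3.6}; for $N \ge h$ the equation rearranges to the two-term recurrence \eqref{eq3.5}. This proves the sufficiency half of the characterization and simultaneously derives the recurrence formula.

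For necessity, substitute the same Frobenius series into \eqref{eq3.1} without assuming the form \eqref{eq3.3}. Collecting powers of $r$ yields
\begin{equation*}
\sum_{j=0}^{2} \sum_{l \ge 0} p_{j,l}\,(N+j-l+\lambda)^{\underline{j}}\, c_{N+j-l} = 0 .
\end{equation*}
For this to reduce to a two-term recurrence involving only $c_{N}$ and $c_{N-h}$ for every admissible $N$, the only permitted index shifts $j-l$ must lie in $\{0,-h\}$; equivalently, $p_{j,l}=0$ unless $l-j \in \{0,h\}$. The independence of the falling factorials $(N+j-l+\lambda)^{\underline{j}}$ in $N$ and $\lambda$ for different $l$ is what forces these vanishings term by term. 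Choosing the integer $m$ so as to absorb the common power of $r$ then recasts each $p_{j}$ as $r^{j-m}(q_{j,0} + q_{j,h}\, r^{h})$, giving precisely \eqref{eq3.3}.

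For the closed form, iterate \eqref{eq3.5}. Because the recurrence shifts indices by $h$, only the subsequence $\{c_{hk}\}_{k \ge 0}$ is nonzero, and telescoping gives
\begin{equation*}
c_{hk} = c_{0} \prod_{i=1}^{k} \frac{-F_{h}(hi+\lambda-h)}{F_{0}(hi+\lambda)} ,
\end{equation*}
where $F_{h}(x) = q_{2,h}\,x(x-1) + q_{1,h}\,x + q_{0,h}$ and $F_{0}(x)$ is defined analogously with $h$ replaced by $0$. Factor each quadratic through its roots, as given by the quadratic formula in terms of the discriminants appearing under the radicals in \eqref{eq3.7}. After the substitution $x = hi + \lambda$ (respectively $hi+\lambda-h$), each linear factor becomes $h(i-\rho)$ for a suitable shift $\rho$, so the product over $i$ of these factors equals $h^{k}(1-\rho)_{k}$, a Pochhammer symbol. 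Assembling the four Pochhammer symbols (two from the numerator roots, two from the denominator roots) together with the geometric factor $(-q_{2,h}/q_{2,0})^{k}\, r^{hk}$ and reinserting the trivial factor $(1)_{k}/k! = 1$ identifies the series as ${}_{3}F_{2}\!\bigl(1, A_{1}, A_{2}; B_{1}, B_{2}; -q_{2,h}\, r^{h}/q_{2,0}\bigr)$; writing $A_{i}, B_{i}$ in terms of the roots and simplifying reproduces the parameters in \eqref{eq3.7}.

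The main obstacle is the final bookkeeping step: keeping track of the shifts $\lambda$ versus $\lambda - h$ between numerator and denominator, ensuring that the leading coefficients $q_{2,h} h^{2}$ and $q_{2,0} h^{2}$ cancel into the clean geometric ratio $-q_{2,h}/q_{2,0}$, and translating the two pairs of quadratic roots into the exact form of the upper and lower parameters, including the half-shift $(2\lambda-1)/(2h)$ visible in \eqref{eq3.7}. Everything else is essentially bookkeeping once the substitution into \eqref{eq3.3} is carried out.
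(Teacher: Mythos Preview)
The paper does not actually prove Theorem~3.1; it is stated as an adaptation of Scheff\'e's 1941 criterion and immediately followed by worked examples (equations~(3.8)--(3.32)), with no proof given either in the text or in the Appendix (which handles only Theorems~4.1 and~4.2). So there is no paper proof to compare against, and your proposal is in effect a reconstruction of the standard Frobenius argument behind Scheff\'e's result.

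Your sufficiency direction and the derivation of the recurrence~\eqref{eq3.5} and indicial equation~\eqref{eq3.6} are clean and correct. The necessity argument is right in spirit but the phrase ``independence of the falling factorials $(N+j-l+\lambda)^{\underline{j}}$ in $N$ and $\lambda$ for different $l$'' is imprecise: what you need is that, at each fixed shift $s=l-j\notin\{0,h\}$, the aggregate coefficient $\sum_{j}p_{j,j+s}(N-s+\lambda)^{\underline{j}}$ must vanish for all sufficiently large integers $N$, hence is the zero polynomial in $N$; linear independence of $1,(x)^{\underline{1}},(x)^{\underline{2}}$ (polynomials of distinct degree) then forces each $p_{j,j+s}=0$. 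Stating it this way closes the gap. Your closed-form derivation via telescoping into Pochhammer symbols is the standard route and is correct; note, incidentally, that the printed formula~\eqref{eq3.7} appears to contain typos (e.g.\ $(q_{1,0}-q_{2,h})^{2}$ in the lower parameters should almost certainly be $(q_{1,0}-q_{2,0})^{2}$), so do not be alarmed if your carefully tracked bookkeeping does not reproduce it verbatim.
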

Consider the differential equation
\begin{align}\label{eq3.8}
(\alpha_{2}\, r^{2}+\alpha_{1}\, r+\alpha_{0})\, u''(r)+(\beta_{1}\, r+\beta_{0})\, u(r)+\gamma_{0}\, u(r)=0 ,
\end{align}
mentioned in the introduction. We want to extract the possible equations with a two-term recurrence relationship between successive coefficients in its series solution. Comparing equation \eqref{eq3.8} with \eqref{eq3.3}, we find that $h+2-m=2$ and $h+2-m=2 >2-m\geq 0$, which leads to $h=m$, $2>2-m\geq 0$. Consequently  $0< m\leq 2$ which gives us two cases: $(m,h)=(1,1)$, and $(m,h)=(2,2)$. \\

For $(m,h)=(1,1)$, equation \eqref{eq3.3} reads: 
\begin{equation}\label{eq3.9}
[q_{2,0}+q_{2,1} r]\, r\, u''(r)+[q_{1,0}+q_{1,1}r]\,  u'(r)+[q_{0,0}+q_{0,1}r]\, r^{-1} \, u(r)=0 \, ,
\end{equation}
and the corresponding equation \eqref{eq3.8} with $q_{2,0}=a_{1}$, $q_{2,h=1}=a_{2}$, $q_{1,0}=b_{0}$, $q_{1,h=1}=b_{1}$, $q_{0,h=1}=\gamma_{0}$, reads
\begin{equation}\label{eq3.10}
(\alpha_{1}z+\alpha_{2} z^{2}) \, u''(z)+(\beta_{0}+\beta_{1}z)\,  u'(z)+\gamma_{0}\, u(z)=0 \, .
\end{equation}
While for $(m,h)=(2,2)$, equation \eqref{eq3.3} reads
\begin{equation}\label{eq3.11}
[q_{2,0}+q_{2,h}\, r^{2}]\, u''(r)+[q_{1,0}+q_{1,h}r^{2}]\, r^{-1}\,  u'(r)+[q_{0,0}+q_{0,h}\, r^{2}]\, r^{-2}\, u(r)=0 \, ,
\end{equation}
and the corresponding equation \eqref{eq3.8} reads
\begin{equation}\label{eq3.12}
(a_{0}+a_{2}\, r^{2})\, u''(r)+b_{1}\,r\, u'(r)+\gamma_{0}\, u(r)=0 \, .
\end{equation}
The two-term recurrence formula for the solution of equation \eqref{eq3.10} is
 \begin{equation}\label{eq3.13}
c_{k}=- \dfrac{ a_{2}\, (k+\lambda-1)(k+\lambda-2)+b_{1}\, (k+\lambda-1) +\gamma_{0} }{ a_{1}\, (k+\lambda)(k+\lambda-1)+b_{0}\, (k+\lambda) }c_{k-1} \, ,\qquad c_{0}=1 \, ,
\end{equation}
where $\lambda$ are the zeros of the indicial equation $a_{1}\lambda\,(\lambda-1)+b_{0}\, \lambda=0$, namely, $\lambda=0$, and $\lambda = 1-{b_{0}}/{a_{1}}$. The closed form solutions in terms of the Gauss hypergeometric functions, respectively, are:
\begin{equation}\label{eq3.14}
u_{1}(r,\lambda=0)=\, _{2}F_{1}\left(-\frac12+\frac{b_{1}}{2a_{2}}-\frac{\sqrt{(a_{2}-b_{1})^{2}-4 a_{2} \gamma_{0}}}{2 a_{2}} \, ,
-\frac12+\frac{b_{1}}{2a_{2}}+\frac{\sqrt{(a_{2}-b_{1})^{2}-4 a_{2} \gamma_{0}}}{2 a_{2}}; \frac{b_{0}}{a_{1}};-\frac{a_{2}}{a_{1}}\,r\right) \, ,
\end{equation}
and
\begin{align}\label{eq3.15}
u_{2}(r,\lambda&=1-\tfrac{b_{0}}{a_{1}})=r^{1-\tfrac{b_{0}}{a_{1}}}\notag\\
&\times {\,}_{2}F_{1}\left(\frac12+\frac{b_{1}}{2a_{2}}-\frac{b_{0}}{a_{1}}+\frac{\sqrt{(a_{2}-b_{1})^{2}-4 a_{2} \gamma_{0}} }{2 a_{1} a_{2}} \, ,
\frac12+\frac{b_{1}}{2a_{2}}+\frac{b_{0}}{a_{1}}+\frac{\sqrt{(a_{2}-b_{1})^{2}-4 a_{2} \gamma_{0}} }{2 a_{1}a_{2}};2-\frac{b_{0}}{a_{1}};-\frac{a_{2}}{a_{1}}\,r\right) \, .
\end{align}
The two-term recurrence relation for the solutions of equation \eqref{eq3.12} reads
\begin{align}\label{eq3.16}
c_{k}=- \dfrac{a_{2}\, (k+\lambda-2)(k+\lambda-3)+b_{1}\, (k+\lambda-2) +\gamma_{0}}{ a_{0}\, (k+\lambda)(k+\lambda-1)}\, c_{k-2} \, , \quad k\geq 2 \, ,
\end{align}
where, $\lambda$ are the roots of the indicial equation $\lambda\, (\lambda-1)=0$, $a_{0}\neq 0$, with closed form solutions
\begin{equation}\label{eq3.17}
u_{1}(r,\lambda=0)=\, _{2}F_{1}\left(\frac{b_{1}}{4a_{2}}-\frac{1}{4}-\frac{\sqrt{(a_{2}-b_{1})^{2}-4 a_{2}\gamma_{0}}}{4 a_{2}} \, ,\frac{b_{1}}{4a_{2}}-\frac{1}{4}+\frac{\sqrt{(a_{2}-b_{1})^{2}-4 a_{2} \gamma_{0}}}{4 a_{2}};\frac{1}{2};-\frac{a_{2}}{a_{0}}\, r^{2}\right) \, , 
\end{equation}
and
\begin{equation}\label{eq3.18}
u_{2}(r,\lambda=1)=r \, _{2}F_{1}\left(\frac{b_{1}}{4a_{2}}+\frac{1}{4}-\frac{\sqrt{(a_{2}-b_{1})^{2}-4 a_{2} \gamma_{0}}}{4 a_{2}} \, ,\frac{b_{1}}{4a_{2}}+\frac{1}{4}+\frac{\sqrt{(a_{2}-{b_{1}})^{2}-4 {a_{2}} {\gamma_{0}}}}{4 a_{2}};\frac{3}{2};-\frac{a_{2}}{a_{0}}r^{2}\right) \, .
\end{equation}
\begin{remark}  
For the admissible values of the equation parameters, the recurrence relations \eqref{eq3.13} and \eqref{eq3.16} are generic formulas for the two-term recurrence relations for the series solutions of the differential equations \eqref{eq3.10} and \eqref{eq3.12}. In a sense that by assigning admissible values of the parameters, the differential equations \eqref{eq3.10} and \eqref{eq3.12} generate solvable differential equations.
\end{remark} 
\begin{remark}  
Polynomial solutions for the differential equations \eqref{eq3.10} and \eqref{eq3.12} can be easily obtained using suitable values of the equation parameters to terminate the recurrence relations \eqref{eq3.13} and \eqref{eq3.16}.
\end{remark} 

As a second example we consider the differential equation
\begin{equation} \label{eq3.19}
\big( \alpha_{3} \, r^{3} + \alpha_{2} \, r^{2} + \alpha_{1} \, r +\alpha_{0}\big) \, y''(r) + \big( \beta_{2} \, r^{2} + \beta_{1} \, r + \beta_{0} \big) \, y'(r) + \big(\gamma_{1} \, r +\gamma_{0} \big) \, y(r) = 0 \, .
\end{equation}
As before, we want to extract a subclass of equations with a two-term recurrence relationship between successive coefficients in its series solution. We note by comparing equation \eqref{eq3.19} and equation \eqref{eq3.3} that $h+2-m = 3$ and $3>2-m\geq 0$, so $-1< m\leq 2$. Three cases $(m,h)=(0,1), (m,h)=(1,2)$ and $(m,h)=(2,3)$ follow from these constraints, and we consider each of these cases separately: 
\begin{itemize}
\item In the case where $m=0$ and $h=1$, equation \eqref{eq3.3} reads, with $q_{2,h=2} = \alpha_{3}$, $q_{1,h=2} = \beta_{2}$, $q_{0,h=2} = c_{1}$, $q_{2,0}=\alpha_{2}$, $q_{1,0}=\beta_{1}$, $q_{0,0}=\gamma_{0}$,
\begin{align}\label{eq3.20}
[\alpha_{2}r^{2}+\alpha_{3}\,r^{3}]\, u''(r)+[\beta_{1}r+\beta_{2}\,r^{2}]\, u'(r)+(\gamma_{0}+\gamma_{1}\,r)\, u(r)=0 \, ,\qquad \gamma_{0}\neq 0 \, ,
\end{align}
For this differential equation, the recurrence relation \eqref{eq3.5} implies
\begin{align}\label{eq3.21}
c_{k}=- \dfrac{\alpha_{3}\, (k+\lambda-1)(k+\lambda-2)+\beta_{2}\, (k+\lambda-1) +\gamma_{1}}{\alpha_{2}\, (k+\lambda)(k+\lambda-1)+\beta_{1}\, (k+\lambda) +\gamma_{0}}c_{k-1} \, , \qquad c_{0}\neq 0 \, , 
\end{align}
where $\lambda$ are the roots of the indicial equation $\alpha_{2}\lambda(\lambda-1)+\beta_{1}\lambda+\gamma_{0}=0$. The values of $\lambda$ with \eqref{eq3.21} gives the following two linearly independent series solutions for the differential equation \eqref{eq3.20}:
\begin{align}\label{eq3.22}
u_{1}(r)&=z^{\frac{\alpha_{2}-\beta_{1}+\sqrt{(\alpha_{2}-\beta_{1})^{2}-4 \alpha_{2}\gamma_{0}}}{2 \alpha_{2}}} {}_{2}F_{1}\left(\frac{\beta_{2}}{2\alpha_{3}}-\frac{\beta_{1}}{2\alpha_{2}}+\frac{\sqrt{(\alpha_{2}-\beta_{1})^{2}-4 \alpha_{2}\gamma_{0}}}{2 \alpha_{2}}-\frac{\sqrt{(\alpha_{3}-\beta_{2})^{2}-4 \alpha_{3}\gamma_{1}}}{2 \alpha_{3}} \,, \right.\notag\\
&\frac{\beta_{2}}{2\alpha_{3}}-\frac{\beta_{1}}{2\alpha_{2}}+\frac{\sqrt{(\alpha_{2}-\beta_{1})^{2}-4 \alpha_{2}\gamma_{0}}}{2 \alpha_{2}}+\frac{\sqrt{(\alpha_{3}-\beta_{2})^{2}-4 \alpha_{3}\gamma_{1}}}{2 \alpha_{3}};\left.\frac{\alpha_{2}+\sqrt{(\alpha_{2}-\beta_{1})^{2}-4\alpha_{2}\gamma_{0}}}{\alpha_{2}};-\frac{\alpha_{3} }{\alpha_{2}}\, r\right) \, ,
\end{align}
and
\begin{align}\label{eq3.23}
u_{2}(r)&=r^{\frac{\alpha_{2}-\beta_{1}-\sqrt{(\alpha_{2}-\beta_{1})^{2}-4 \alpha_{2}\gamma_{0}}}{2 \alpha_{2}}} {}_{2}F_{1}\left(\frac{\beta_{2}}{2\alpha_{3}}-\frac{\beta_{1}}{2\alpha_{2}}-\frac{\sqrt{(\alpha_{2}-\beta_{1})^{2}-4 a_{2}\gamma_{0}}}{2 \alpha_{2}}-\frac{\sqrt{(\alpha_{3}-\beta_{2})^{2}-4 \alpha_{3}\gamma_{1}}}{2 \alpha_{3}} \, ,\right.\notag\\
&\frac{b_{2}}{2a_{3}}-\frac{b_{1}}{2a_{2}}-\frac{\sqrt{(a_{2}-b_{1})^{2}-4 a_{2}\gamma_{0}}}{2 a_{2}}+\frac{\sqrt{(a_{3}-b_{2})^{2}-4 a_{3}\gamma_{1}}}{2 a_{3}};\left.\frac{a_{2}-\sqrt{(a_{2}-b_{1})^{2}-4a_{2}\gamma_{0}}}{a_{2}};-\frac{a_{3} }{a_{2}}\, r\right) \, .
\end{align}
\item In the case where $m=1$ and $h=2$, equation \eqref{eq3.3} reads, 
\begin{equation}\label{eq3.24}
[\alpha_{1}\, r+\alpha_{3} \,r^{3}]\, u''(r) +[\beta_{0}\, +\beta_{2}\, r^{2}]\, u'(r)+\gamma_{1}\, r\,  u(r)=0 \, ,\qquad \beta_{0}\neq 0 \, ,
\end{equation}
For this differential equation, the recurrence relation \eqref{eq3.5} implies
\begin{align}\label{eq3.25}
c_{k}=- \dfrac{\alpha_{3}\, (k+\lambda-2)(k+\lambda-3)+\beta_{2}\, (k+\lambda-2) +\gamma_{1}}{\alpha_{1}\, (k+\lambda)(k+\lambda-1)+\beta_{0}\, (k+\lambda)}c_{k-2} \, ,
\end{align}
where $\lambda$ are the roots of the indicial equation $a_{1}\, \lambda(\lambda-1)+b_{0}\, \lambda =0$. The values of $\lambda$ with \eqref{eq3.25} gives the following two linearly independent series solutions for the differential equation \eqref{eq3.24}:
\begin{align}\label{eq3.26}
u_{1}(r)&=\, _{2}F_{1}\left(\frac{\beta_{2}}{4\alpha_{3}}-\frac{1}{4}-\frac{\sqrt{(\alpha_{3}-\beta_{2})^{2}-4 \alpha_{3}\gamma_{1}}}{4 \alpha_{3}} \, ,\frac{\beta_{2}}{4\alpha_{3}}-\frac{1}{4}+\frac{\sqrt{(\alpha_{3}-\beta_{2})^{2}-4 \alpha_{3}\gamma_{1}}}{4 \alpha_{3}}; \frac{\alpha_{1}+\beta_{0}}{2 \alpha_{1}};-\frac{\alpha_{3} r^{2}}{\alpha_{1}}\right) \, ,
\end{align}
and
\begin{align}\label{eq3.27}
u&_{2}(r)=z^{1-\frac{\beta_{0}}{\alpha_{1}}}\, \times\\
& _{2}F_{1}\left(\frac14+\frac{\beta_{2}}{4\alpha_{3}}-\frac{\beta_{0}}{2\alpha_{1}}-\frac{\sqrt{(\alpha_{3}-\beta_{2})^{2}-4 \alpha_{3}\gamma_{1}}}{4\alpha_{3}} \, ,\frac14+\frac{\beta_{2}}{4\alpha_{3}}-\frac{\beta_{0}}{2\alpha_{1}}+\frac{\sqrt{(\alpha_{3}-\beta_{2})^{2}-4 \alpha_{3}\gamma_{1}}}{4\alpha_{3}};\frac{3}{2}-\frac{\beta_{0}}{2\alpha_{1}};-\frac{\alpha_{3} r^{2}}{\alpha_{1}}\right) \, .
\end{align}
\item In the case where $m=2$ and $h=3$, equation \eqref{eq3.3} reads, 
\begin{equation}\label{eq3.28}
(\alpha_{0}+\alpha_{3} \,r^{3})\, u''(r)+\beta_{2}\, r^{2}\, u'(r)+\gamma_{1} r\,  u(r)=0 \, ,\qquad \alpha_{0}\neq 0 \, ,
\end{equation}
For this differential equation, the recurrence relation \eqref{eq3.5} implies
\begin{align}\label{eq3.29}
c_{k}=- \dfrac{\alpha_{3}\, (k+\lambda-3)(k+\lambda-4)+\beta_{2}\, (k+\lambda-3) +\gamma_{1}}{\alpha_{0}\, (k+\lambda)(k+\lambda-1)} c_{k-3} \, , \qquad c_{0}\neq 0 \, , 
\end{align}
where $\lambda$ are the roots of the indicial equation $\alpha_{0}\,\lambda\,(\lambda-1)=0$. The values of $\lambda$ with \eqref{eq3.29} gives the following two linearly independent series solutions for the differential equation \eqref{eq3.28}:
\begin{align}\label{eq3.30}
u_{1}(r)&=\, _{2}F_{1}\left(-\frac{\sqrt{(\alpha_{3}-\beta_{2})^{2}-4\alpha_{3}\gamma_{1}}+\alpha_{3}-\beta_{2}}{6\alpha_{3}} \, ,\frac{\sqrt{(\alpha_{3}-\beta_{2})^{2}-4\alpha_{3}\gamma_{1}}-\alpha_{3}+\beta_{2}}{6\alpha_{3}};\frac{2}{3};-\frac{\alpha_{3}}{\alpha_{0}}\, r^{3}\right)G  \, ,
\end{align}
and
\begin{align}\label{eq3.31}
u_{2}(r)=r\, _{2}F_{1}\left(\frac{-\sqrt{(\alpha_{3}-\beta_{2})^{2}-4 \alpha_{3}\gamma_{1}}+\alpha_{3}+\beta_{2}}{6\alpha_{3}} \, ,\frac{\sqrt{(\alpha_{3}-\beta_{2})^{2}-4 \alpha_{3}\gamma_{1}}+\alpha_{3}+\beta_{2}}{6\alpha_{3}};\frac{4}{3};-\frac{\alpha_{3}}{\alpha_{0}}\, r^{3}\right)G  \, .
\end{align}
\end{itemize}

\begin{remark} For the polynomial coefficients differential equation \eqref{eq3.1}, there are $n$-generic equations with the series solutions explicitly found using the two-term recurrence relation \eqref{eq3.5}. The closed-form solutions of these equations are given using \eqref{eq3.7}. Indeed, for $h+2-m=n, n>2-m\geq 0$, we have $n$ cases given $h=n+m-2$, $2-n< m\leq 2$, $n\geq 2$. Some of these equations are as follows (See Theorem \ref{Thm3.1}):
\begin{align} \label{eq3.32}
\text{$n$-equations}~ \left\{ \begin{array}{ll}
      (\alpha_{0}+\alpha_{n} r^{n}) \, y''+\beta_{n-1}z^{n-1}\, y'+\varepsilon_{n-2}r^{n-2}\, y=0\,, & \alpha_{0}\neq 0\\
(\alpha_{1} r +\alpha_{n} r^{n})\, y''+(\beta_{0}+\beta_{n-1}r^{n-1})\, y'+\varepsilon_{n-2}r^{n-2}\, y=0\,, & \beta_{0}\neq 0\\
(\alpha_{2} r^{2} +\alpha_{n} r^{n})\, y''+(\beta_{1} r+\beta_{n-1}r^{n-1})\, y'+(\varepsilon_{0}+\varepsilon_{n-2}r^{n-2})\, y=0\,, & \varepsilon_{0}\neq 0\\
(\alpha_{3} r^{3} +\alpha_{n} r^{n})\, y''+(\beta_{2} r^{2}+\beta_{n-1}r^{n-1})\, y'+(\varepsilon_{1} r +\varepsilon_{n-2}r^{n-2})\, y=0\,,\\
(\alpha_{4} r^{4} +\alpha_{n} r^{n})\, y''+(\beta_{3} r^{3}+\beta_{n-1}r^{n-1})\, y'+(\varepsilon_{2} r^{2} +\varepsilon_{n-2}r^{n-2})\, y=0\,,\\
\vdots\\
(\alpha_{n-1} r^{n-1} +\alpha_{n} r^{n})\, y''+(\beta_{n-2} r^{n-2}+\beta_{n-1}r^{n-1})\, y'+(\varepsilon_{n-3} r^{n-3} +\varepsilon_{n-2}r^{n-2})\, y=0\,.\\
\end{array} \right. 
\end{align} 
\end{remark}
%%%%%%%%%%%%%%%%%%%%%%%%%%%%%%%%
\section{Theorems And Algorithm}\label{Sec4}
%%%%%%%%%%%%%%%%%%%%%%%%%%%%%%%%

In this section we present three elementary theorems that classify the polynomial solutions of the general differential equation \eqref{eq1.3}. Following that, we will give a brief description of the Mathematica\textsuperscript{\textregistered} program that was developed to accompany the present work. A link to the complete program, available for direct uses, is also provided for direct applications of these theorems. 

\subsection{Theorems}

Theorem \ref{Thm4.1} applies to $2^{n}$ equations of \eqref{eq1.3} and gives a recurrence relation needed to compute polynomial solutions in the neighbourhood of an ordinary point. Theorems \ref{Thm4.2} and \ref{Thm4.3} give the polynomial solutions for the  $2^{n - 1} + 2^{n - 2}$ equations from \eqref{eq1.3} about a regular singular point. The proofs of Theorems \ref{Thm4.1} and \ref{Thm4.2} may be found in the Appendix, where the proof of Theorem \ref{Thm4.3} follows closely to the proof of Theorem \ref{Thm4.2} and is therefore omitted.\\

\begin{theorem}\label{Thm4.1} $\boldsymbol{(\alpha_{0} \neq 0)}$\\
For $\alpha_{0} \neq 0$, the second-order linear differential equation 
\begin{equation}\label{eq4.1}
\left(\sum_{k=0}^{n} \alpha_{k} \, r^{k} \right) \, y''(r) +\left( \sum_{k=0}^{n-1} \beta_{k} \, r^{k} \right) \, y'(r) - \left(\sum_{k=0}^{n-2} \tau_{k} \, r^{k} \right) \, y(r) = 0 \, ,
\qquad n \geq 2 \, ,
\end{equation}
admits the solution $y(r)=\mathcal{P}_{m}(r)=\sum_{i=0}^{m} \mathcal{C}_{i} \, r^{i}$ in the neighbourhood of the ordinary point $r=0$ and valid to the nearest nonzero singular point of $\mathcal{P}_{m}(r)=0$. Here $\mathcal{P}_{m}(r)$ is an $m^{th}$-degree polynomial if
\begin{equation}\label{eq4.2}
\sum_{j=0}^{n} \Big[ \alpha_{n-j} \, (\ell-n+j+2) \, (\ell-n+j+1) + \beta_{n-j-1} \, (\ell-n+j+2) - \tau_{n-j-2} \Big] \, \mathcal{C}_{\ell-n+j+2} = 0 \, ,
\end{equation} 
for all $\ell = 0, \, 1, \, 2, \, \ldots, \, m+n-2$, and $\tau_{i} = \beta_{i} = 0$ if $i<0$. The equation corresponding to the value of $\ell = m+n-2$ gives the necessary condition \eqref{eq2.1} for the existence of the $m^{th}$-degree polynomial solution 
\begin{equation}\label{eq4.3}
\tau_{n-2} = \alpha_{n} \, m \, (m-1) + \beta_{n-1} \, m \, ,\qquad m = 0, \, 1, \, 2, \, \ldots \, .
\end{equation}
The remaining $(m+n-2)$ linear equations consist of $(n-2)$ sufficient conditions that relate $\{ \alpha_{k} \}_{k=0}^{n}$ and $\{ \beta_{k} \}_{k=0}^{n-1}$ with $\{ \tau_{k} \}_{k=0}^{n-2}$, in addition to the $m$ linear equations required to evaluate the coefficients $\{ \mathcal{C}_{i} \}_{i=1}^{m}$ of the polynomial solution in terms of $\mathcal{C}_{0} \neq 0$.
\end{theorem}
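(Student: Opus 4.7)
The plan is to substitute the polynomial ansatz $y(r)=\sum_{i=0}^{m}\mathcal{C}_{i}\,r^{i}$ into \eqref{eq4.1} and equate the coefficient of each power $r^{\ell}$ to zero, producing a finite linear system equivalent to \eqref{eq4.2}. Because $\alpha_{0}\neq 0$ makes $r=0$ an ordinary point, this polynomial ansatz is admissible; and conversely any solution of the algebraic system yields an honest solution of \eqref{eq4.1} valid until the nearest nonzero singularity of $P_n(r)$.

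First I would write $y'(r)=\sum_{i\geq 1} i\,\mathcal{C}_{i}\,r^{i-1}$ and $y''(r)=\sum_{i\geq 2} i(i-1)\,\mathcal{C}_{i}\,r^{i-2}$, extending $\mathcal{C}_{i}=0$ for $i<0$ or $i>m$ so that index boundaries are handled uniformly. Multiplying by $\alpha_{k}r^{k}$, $\beta_{k}r^{k}$, $\tau_{k}r^{k}$ and reading off the coefficient of $r^{\ell}$, the contribution from $\alpha_{k}r^{k}y''$ is $\alpha_{k}(\ell-k+2)(\ell-k+1)\,\mathcal{C}_{\ell-k+2}$, that from $\beta_{k}r^{k}y'$ is $\beta_{k}(\ell-k+1)\,\mathcal{C}_{\ell-k+1}$, and that from $\tau_{k}r^{k}y$ is $\tau_{k}\,\mathcal{C}_{\ell-k}$. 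Next I would re-parameterize via $j=n-k$ in the $\alpha$-sum, $j=n-k-1$ in the $\beta$-sum, and $j=n-k-2$ in the $\tau$-sum, so that all three contributions carry the common coefficient index $\mathcal{C}_{\ell-n+j+2}$. Adopting $\beta_{-1}=\tau_{-2}=\tau_{-1}=0$ to absorb any out-of-range terms, the three aligned sums collapse into the single combined sum displayed in \eqref{eq4.2}, valid for each $\ell=0,1,\ldots,m+n-2$.

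For the terminal identification, I would set $\ell=m+n-2$ in \eqref{eq4.2}: then $\ell-n+j+2=m+j$ runs over $m,m+1,\ldots,m+n$, and by the convention $\mathcal{C}_{i}=0$ for $i>m$ only the $j=0$ term survives, giving $[\alpha_{n}m(m-1)+\beta_{n-1}m-\tau_{n-2}]\,\mathcal{C}_{m}=0$; since $\mathcal{C}_{m}\neq 0$ for a genuine degree-$m$ polynomial, this is exactly \eqref{eq4.3}. A final count closes the argument: \eqref{eq4.2} is a system of $m+n-1$ linear equations; one of them is the necessary condition just obtained, $m$ of them recursively express $\mathcal{C}_{1},\ldots,\mathcal{C}_{m}$ in terms of $\mathcal{C}_{0}$, and the remaining $n-2$ are the sufficient conditions that constrain $\{\tau_{k}\}_{k=0}^{n-2}$ against $\{\alpha_{k}\}_{k=0}^{n}$ and $\{\beta_{k}\}_{k=0}^{n-1}$. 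The main obstacle is the index bookkeeping in the re-parameterization — one must verify that the three shifted ranges can be simultaneously enlarged to $j=0,\ldots,n$ using the zero-extension conventions without introducing spurious terms, and that the same formula \eqref{eq4.2} remains correct near both endpoints $\ell=0$ (where negative-index $\mathcal{C}$'s vanish) and $\ell=m+n-2$ (where large-index $\mathcal{C}$'s vanish).
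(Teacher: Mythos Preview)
Your proposal is correct and follows essentially the same approach as the paper: substitute the polynomial ansatz into \eqref{eq4.1}, collect the coefficient of each power $r^{\ell}$, and reindex to align the three sums on a common $\mathcal{C}_{\ell-n+j+2}$, then read off the necessary condition at $\ell=m+n-2$ and count the remaining equations. The paper's Appendix carries out the same computation via explicit successive index shifts (equations (A.2)--(A.7)) rather than your direct coefficient extraction, but the content and the final recurrence are identical.
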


\begin{theorem}\label{Thm4.2} $\boldsymbol{(\alpha_{0}=0, \, \alpha_{1}\neq 0)}$\\
For $\alpha_{0}=0$ and $\alpha_{1}\neq 0$, the second-order linear differential equation 
\begin{equation}\label{eq4.4}
\left( \sum_{k=1}^{n} \alpha_{k} \, r^{k} \right) y''(r) + \left( \sum_{k=0}^{n-1} \beta_{k} \, r^{k} \right) y'(r) - \left(\sum_{k=0}^{n-2} \tau_{k} \, r^{k}\right) y(r) = 0 \, , \qquad n\geq 2 \, ,
\end{equation}
admits the solution $y(r)=r^{s} \, \mathcal{P}_{m}(r) = r^{s} \, \sum_{i=0}^{m} \mathcal{C}_{i} \, r^{i}$ in the neighbourhood of the regular singular point $r=0$, where $\mathcal{P}_{m}(r)$ is an $m^{th}$-degree polynomial if
\begin{equation}\label{eq4.5}
\sum_{j=0}^{n-1} \Big[ \alpha_{n-j} \, (\ell-n+j+s+2) \, (\ell-n+j+s+1) + \beta_{n-j-1} \, (\ell-n+j+s+2) - \tau_{n-j-2} \Big] \, \mathcal{C}_{\ell-n+j+2} = 0 \, ,
\end{equation} 
for all $\ell = 0, \, 1, \, 2, \, \ldots, \, m+n-2$, and $\tau_{i} = 0$ if $i<0$.  Here $s$ is a root of the indicial equation: $s(s-1) + ( \beta_{0}/\alpha_{1} )s = 0$, so $s \in \big\{ 0 \, , \, 1-(\beta_{0}/\alpha_{1}) \big\}$.
\end{theorem}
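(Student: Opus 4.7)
My plan is to proceed by the method of Frobenius, substituting the ansatz $y(r) = r^{s}\mathcal{P}_m(r) = \sum_{i=0}^{m} \mathcal{C}_i\, r^{s+i}$ directly into equation \eqref{eq4.4} and matching coefficients of like powers of $r$. Since $\alpha_0 = 0$ and $\alpha_1 \neq 0$, the point $r=0$ is a regular singular point, which is precisely the setting where this ansatz is the natural generalization of the polynomial ansatz used in Theorem \ref{Thm4.1}. Termwise differentiation gives $y'(r) = \sum_{i=0}^{m}\mathcal{C}_i(s+i)r^{s+i-1}$ and $y''(r) = \sum_{i=0}^{m}\mathcal{C}_i(s+i)(s+i-1)r^{s+i-2}$, and inserting these into \eqref{eq4.4} yields three double sums in $i$ and $k$ whose generic term carries a power $r^{s+i+k-2}$, $r^{s+i+k-1}$, or $r^{s+i+k}$ respectively.

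Next, I would reorganize each double sum by fixing the exponent. Writing the exponent of $r$ in the form $r^{s+\ell}$ (equivalently, using $p = i+k-2$, $p = i+k-1$, $p = i+k$ in the three sums) and then making the substitution $i = \ell - n + j + 2$, the coefficient of $r^{s+\ell}$ becomes exactly
\begin{equation*}
\sum_{j=0}^{n-1}\Big[\alpha_{n-j}(\ell - n + j + s + 2)(\ell - n + j + s + 1) + \beta_{n-j-1}(\ell - n + j + s + 2) - \tau_{n-j-2}\Big]\mathcal{C}_{\ell-n+j+2}.
\end{equation*}
Setting this to zero for every $\ell$ produces \eqref{eq4.5}, with the convention $\tau_i = 0$ for $i<0$ (and analogously for $\beta_i$, $\mathcal{C}_i$) absorbing the missing boundary terms that arise when the shifted index runs out of range.

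The lowest power appearing in the expansion is $r^{s-1}$, which receives contributions only from the $\alpha_1$-term and the $\beta_0$-term (the $\tau$-sum starts at $r^{s}$ because $\alpha_0 = 0$ forces a shift there). Its vanishing gives $\mathcal{C}_0\bigl[\alpha_1 s(s-1) + \beta_0 s\bigr] = 0$, and since $\mathcal{C}_0 \neq 0$ and $\alpha_1 \neq 0$ we recover the indicial equation $s(s-1)+(\beta_0/\alpha_1)s = 0$, with roots $s=0$ and $s = 1-\beta_0/\alpha_1$, as claimed. The topmost relation, at $\ell = m+n-2$, isolates $\mathcal{C}_m$ and reproduces the necessary condition of Theorem \ref{Thm2.1} (shifted by $s$), and the intermediate $(m+n-3)$ equations split into $m$ recursions that solve for $\mathcal{C}_1,\dots,\mathcal{C}_m$ in terms of $\mathcal{C}_0$ together with $n-2$ compatibility constraints on the $\tau_k$.

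The only genuine obstacle is the bookkeeping: one must verify that the three exponent shifts in the three double sums line up consistently after the substitution $i = \ell - n + j + 2$, and that the summation limits in $j$ (namely $0$ to $n-1$) correctly cover all nonzero contributions once the convention on negative indices is imposed. This is mechanical but must be done carefully, as a single off-by-one error would corrupt the recurrence. Everything else—deriving the indicial equation from the leading power and interpreting the terminal relation as the necessary condition—follows immediately from the same argument used to prove Theorem \ref{Thm4.1}, with $s$ playing a purely additive role in every coefficient.
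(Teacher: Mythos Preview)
Your proposal is correct and follows essentially the same Frobenius-method approach as the paper: substitute the ansatz, collect by powers of $r$, shift indices to reach the compact recurrence \eqref{eq4.5}, and read off the indicial equation. The only cosmetic difference is that the paper obtains the indicial roots up front from the limits $\lim_{r\to 0} r\,p(r)=\beta_0/\alpha_1$ and $\lim_{r\to 0} r^2 q(r)=0$, whereas you extract them from the vanishing of the $r^{s-1}$ coefficient after substitution; these are of course equivalent.
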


\begin{theorem}\label{Thm4.3} $\boldsymbol{( \alpha_{0} = \alpha_{1} = \beta_{0} = 0$, $\alpha_{2} \neq 0)}$\\
For $\alpha_{0} = \alpha_{1} = \beta_{0} = 0$ and $\alpha_{2} \neq 0$, the second-order linear differential equation
\begin{align}\label{eq4.6}
\left( \sum_{k=2}^{n} \alpha_{k} \, r^{k} \right) y''(r) + \left( \sum_{k=1}^{n-1} \beta_{k} \, r^{k} \right) y'(r) - \left( \sum_{k=0}^{n-2} \tau_{k} \, r^{k} \right) y(r) = 0 \, , \qquad n \geq 2 \, ,
\end{align}
admits the solution $y(r)=r^{s} \, \mathcal{P}_{m}(r) = r^{s} \, \sum_{i=0}^{m} \mathcal{C}_{i} \, r^{i}$ in the neighbourhood of the regular singular point $r=0$,  where $\mathcal{P}_{m}(r)$ is an $m^{th}$-degree polynomial if
\begin{equation}\label{eq4.7}
\sum_{j=0}^{n-2} \Big[ \alpha_{n-j} \, (\ell-n+j+s+2) \, (\ell-n+j+s+1) + \beta_{n-j-1} \, (\ell-n+j+s+2) - \tau_{n-j-2} \Big] \, \mathcal{C}_{\ell-n+j+2} = 0 \, ,
\end{equation} 
for all $\ell = 0, \, 1, \, 2, \, \ldots, \, m+n-2$.  Here $s$ is a non-zero root of the indicial equation: $s(s-1) + ( \beta_{0}/\alpha_{1})s - \tau_{0}\alpha_{2}= 0$, so $s \in \left\{ \Big( \alpha_{2} - \beta_{1} \pm \sqrt{(\alpha_{2} - \beta_{1})^{2} + 4 \, \alpha_{2} \, \tau_{0}} \, \Big) / \big( 2 \, \alpha_{2} \big) \right\}$.
\end{theorem}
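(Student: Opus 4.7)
The plan is to mimic the Frobenius substitution used in Theorem \ref{Thm4.2}, adjusted for the additional vanishing $\alpha_1 = \beta_0 = 0$ which raises the smallest admissible exponent in each sum of \eqref{eq4.6}. First I substitute the ansatz $y(r) = r^s \sum_{i=0}^{m} \mathcal{C}_i r^{i}$ into \eqref{eq4.6}, so that
\[
y(r) = \sum_{i} \mathcal{C}_i r^{s+i}, \qquad
y'(r) = \sum_{i} (s+i)\, \mathcal{C}_i r^{s+i-1}, \qquad
y''(r) = \sum_{i} (s+i)(s+i-1)\, \mathcal{C}_i r^{s+i-2}.
\]
Multiplying these against the polynomial coefficients of \eqref{eq4.6} produces three double sums in which the exponent of $r$ equals $s+i+k-2$, $s+i+k-1$, and $s+i+k$ respectively. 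With $k$ starting at $2$, $1$, $0$ (forced by $\alpha_0 = \alpha_1 = \beta_0 = 0$), every term contributes a power of $r$ no smaller than $r^s$, and the regular singular nature of $r=0$ is immediate.

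Next I reindex each double sum to collect the coefficient of a common power $r^{s+\ell}$. In all three sums the substitution $i = \ell - n + j + 2$ together with $k = n - j$ (for $y''$), $k = n - 1 - j$ (for $y'$), and $k = n - 2 - j$ (for $y$) works uniformly, with $j$ ranging over $\{0, 1, \ldots, n - 2\}$ (one value shorter than in Theorem \ref{Thm4.2} because $\alpha_1 = \beta_0 = 0$ kill the $j = n - 1$ contributions and there is no $\tau_{-1}$). Collecting and setting the coefficient of each $r^{s+\ell}$ to zero reproduces \eqref{eq4.7}. To identify the indicial equation I isolate the smallest admissible $\ell = 0$; then $i = j + 2 - n \geq 0$ forces $j = n - 2$, giving $i = 0$ and the single coefficient equation
\[
\bigl[\alpha_2\, s(s-1) + \beta_1\, s - \tau_0\bigr] \mathcal{C}_0 = 0.
\]
Since $\mathcal{C}_0 \neq 0$, this yields the indicial equation whose roots are precisely $s = \bigl(\alpha_2 - \beta_1 \pm \sqrt{(\alpha_2 - \beta_1)^2 + 4\, \alpha_2\, \tau_0}\bigr) / (2\, \alpha_2)$. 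Restricting to a \emph{nonzero} $s$ ensures that the leading $r^s$ factor is not absorbed into $\mathcal{P}_m$.

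Finally, I isolate the largest exponent $\ell = m + n - 2$. Since $\mathcal{C}_i = 0$ for $i > m$, only $j = 0$ (giving $i = m$) survives, producing
\[
\bigl[\alpha_n (s+m)(s+m-1) + \beta_{n-1}(s+m) - \tau_{n-2}\bigr] \mathcal{C}_m = 0,
\]
the shifted analogue of the necessary condition \eqref{eq4.3} needed for polynomial termination. The intermediate values $0 < \ell < m + n - 2$ then split, exactly as in Theorem \ref{Thm4.2}, into $n - 2$ \emph{sufficient} compatibility conditions tying $\{\tau_k\}$ to $\{\alpha_k, \beta_k\}$ and $m$ recurrences determining $\mathcal{C}_1, \ldots, \mathcal{C}_m$ in terms of $\mathcal{C}_0$. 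The one step that requires care is the boundary bookkeeping: the formal sum \eqref{eq4.7} includes terms whose coefficient index $\ell - n + j + 2$ falls outside $\{0, 1, \ldots, m\}$, and one must verify that after dropping those vanishing contributions the reduced equations coincide with the raw coefficient equations obtained by direct substitution. This is the main technical obstacle, but it is a mechanical check exactly parallel to the one carried out in the proof of Theorem \ref{Thm4.2}, which is why the present proof can be omitted without loss.
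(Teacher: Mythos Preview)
Your proposal is correct and is precisely the argument the paper has in mind: the authors explicitly omit the proof of Theorem~\ref{Thm4.3}, stating that it ``follows closely to the proof of Theorem~\ref{Thm4.2},'' and your Frobenius substitution, reindexing to a common power $r^{s+\ell}$, and extraction of the indicial equation $\alpha_2 s(s-1)+\beta_1 s-\tau_0=0$ from the $\ell=0$ term are exactly that parallel computation with the summation range shortened by one ($j=0,\ldots,n-2$) to account for $\alpha_1=\beta_0=0$.
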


\begin{remark} (Cauchy-Euler equation): In the case of $n=2$, equation \eqref{eq4.6} reduces to the equation, 
\begin{equation} \label{eq4.8}
\alpha_{2} \, r^{2} \, y''(r) + \beta_{1} \, r \, y'(r) - \tau_{0} \, y(r) = 0 \, , 
\end{equation} 
for which the recurrence relation \eqref{eq4.7} reduces to: 
\begin{align}\label{eq4.9}
\Big[\alpha_{2} \, (\ell+s)(\ell+s-1) + \beta_{1} \, (\ell+s) - \tau_{0}\Big]\mathcal{C}_{\ell}=0 \, ,\quad \ell = 0,\,1,\,\ldots,\,m \, .
\end{align}
Clearly when $\ell=m$ and if $\mathcal{C}_{m} \neq 0$ then equation \eqref{eq4.9} gives the necessary condition: 
\begin{equation}\label{eq4.10}
\alpha_{2} \, (m + s) (m + s - 1) + \beta_{1} \, (m + s) - \tau_{0} = 0 \, , \qquad \mathcal{C}_{m} \neq 0 \,,
\end{equation}
as expected for the solutions of the Cauchy-Euler equation.
\end{remark}

\subsection{The Mathematica\textsuperscript{\textregistered} Program}
A Mathematica\textsuperscript{\textregistered} algorithm and accompanying GUI (Figure \ref{Fig1}) were developed to assist with this research project. The algorithm uses Cramer's rule to compute the coefficients of the polynomial solutions $\{\mathcal{C}_{i}\}_{i=1}^{m}$ for the equation
\begin{equation}\label{eq4.11}
\left(\sum_{k=0}^{n} \alpha_{k} \, r^{k} \right) \, y''(r) +\left( \sum_{k=0}^{n-1} \beta_{k} \, r^{k} \right) \, y'(r) - \left(\sum_{k=0}^{n-2} \tau_{k} \, r^{k} \right) \, y(r) = 0 \, ,
\qquad n \geq 2 \, .
\end{equation}
The main functions of the program are: 
\begin{itemize}
\item \textbf{Theorem1Fn}: Evaluates the coefficients $\{\mathcal{C}_{i}\}_{i=0}^{m}$ of the $m^{th}$-degree polynomial solution in the case where the equation is in the form of Theorem \ref{Thm4.1}.           
\item \textbf{Theorem2Fn}: Evaluates the coefficients $\{\mathcal{C}_{i}\}_{i=0}^{m}$ of the $m^{th}$-degree polynomial solution in the case where the equation is in the form of Theorem \ref{Thm4.2}.
\item \textbf{Theorem3Fn}: Evaluates the coefficients $\{\mathcal{C}_{i}\}_{i=0}^{m}$ of the $m^{th}$-degree polynomial solution in the case where the equation is in the form of Theorem \ref{Thm4.3}. 
\item \textbf{GeneralRecursionFn}: For given values of $m$ and $n$, this function generates an $(m+n-1)\times(m+1)$ matrix containing the equations needed to evaluate the polynomial coefficients, in addition to the sufficient and necessary conditions. This function serves as a general recurrence relation that is called by Theorem1Fn, Theorem2Fn, and Theorem3Fn. 
\end{itemize}
A complete Mathematica\textsuperscript{\textregistered} program may be obtained by contacting the authors, or via the following Github repository: \href{https://github.com/KyleBryenton/OPSOLDE}{https://github.com/KyleBryenton/OPSOLDE}. 
\begin{center}
\begin{figure}[h!]
\includegraphics[width=\textwidth]{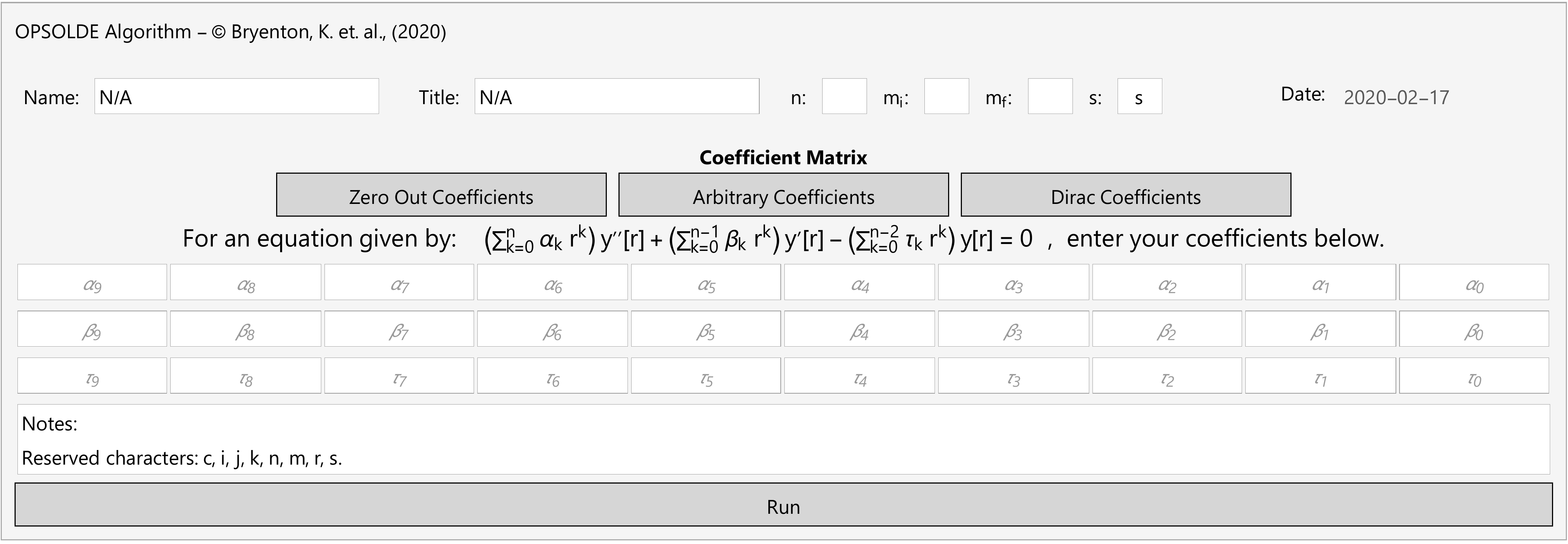}
\caption{The graphical user interface of the OPSOLDE Algorithm. Upon entering a value for $2\leq n\leq 9$, the coefficient field will resize appropriately and permit the user to choose the values for each coefficient.}\label{Fig1}
\label{GUI}
\end{figure}
\end{center}

%%%%%%%%%%%%%%%%%%%%%%%%%
\section{Applications} \label{Sec5}
%%%%%%%%%%%%%%%%%%%%%%%%%
The simplicity of the above theorems lay in their constructive approach to explicitly computing the polynomial solution coefficients and to provide all the necessary and sufficient conditions for the existence of these polynomials. Further, they can be easily implemented using any available symbolic algebra system. In this section, we show how to make use of these theorems.

%%%%%%%%%%%%%%%%%%%%%%%
\subsection{General Case ($n=4$)}
%%%%%%%%%%%%%%%%%%%%%%%%%
To illustrate the constructive approach of Theorem \ref{Thm4.1}, we consider equation \eqref{eq4.1} in the case of $n=4$ which reads:
\begin{align}\label{eq5.1}
( \alpha_{4}\,r^{4} + \alpha_{3}\,r^{3} + \alpha_{2}\,r^{2} + \alpha_{1}\,r + \alpha_{0} ) \, y''(r)  +&  ( \beta_{3}\,r^{3} + \beta_{2}\,r^{2} + \beta_{1} r + \beta_{0} ) \, y'(r)-  ( \tau_{2}\,r^{2} + \tau_{1}\,r + \tau_{0} ) \, y(r) = 0 \, , \notag\\
& \qquad \qquad \qquad \text{where} \quad \alpha_{0} \neq 0 \, , \quad \alpha_{k} \, , \, \beta_{k} \, , \, \tau_{k} \in \mathbb{R} \quad \forall\, k \, .
\end{align}
For the $m^{th}$-degree polynomial solutions $\mathcal{P}_{m}(r)\equiv y(r)=\sum_{i=0}^{m} \mathcal{C}_{i} \, r^{i}$, \eqref{eq4.2} gives the five term recurrence-relation:  
\begin{align}\label{eq5.2}
[\alpha_{4} \, (\ell-2) \, (\ell-3) &+ \beta_{3} \, (\ell-2) - \tau_{2} ] \mathcal{C}_{\ell-2}+[ \alpha_{3} \, (\ell-1) \, (\ell-2) + \beta_{2} \, (\ell-1) - \tau_{1}] \mathcal{C}_{\ell-1}\notag\\
&+ [ \alpha_{2} \, \ell \, (\ell-1) + \beta_{1} \, \ell - \tau_{0} ] \mathcal{C}_{\ell} +[ \alpha_{1} \, (\ell+1) \, \ell + \beta_{0} \, (\ell+1)] \mathcal{C}_{\ell+1} + [ \alpha_{0} \, (\ell+2) \, (\ell+1) ] \mathcal{C}_{\ell+2} = 0 \, ,
\end{align}
for $\ell = 0,\,1,\,2,\,\ldots,\,m+2$ and $C_{j}=0$ for  $j<0$.\\

For the zero-degree ($m=0$) polynomial solution, $\mathcal{P}_{0}(r)=1$, $\ell=0,\,1,\,2$. Noting that $\mathcal{C}_{j}=0$, for all $j \neq 0$, it follows for $\ell=0$ and $\ell=1$ that $\tau_{0}=0$ and $\tau_{1}=0$ respectively, while for $\ell=2$, the necessary condition $\tau_{2}=0$ follows.\\

For the first-degree ($m=1$) polynomial solution, $\mathcal{P}_{1}(r)=\mathcal{C}_{0}+\mathcal{C}_{1}\,r$, $\ell=0,\,1,\,2,\,3$. Noting that $\mathcal{C}_{j}=0$, for all $j \neq 0,\,1$, we have for $\ell=0$ that $-\tau_{0} \, \mathcal{C}_{0} + \beta_{0} \, \mathcal{C}_{1}=0$, while for $\ell=1$ and $\ell=2$ we have $-\tau_{1} \, \mathcal{C}_{0} + ( \beta_{1} - \tau_{0} ) \, \mathcal{C}_{1}=0$ and $-\tau_{2} \, \mathcal{C}_{0} + ( \beta_{2} - \tau_{1} ) \, \mathcal{C}_{1}=0$ respectively. For $\ell=3$ it follows $(\beta_{3}-\tau_{2}) \, \mathcal{C}_{1}=0$ from which it is necessary that $\tau_{2}=\beta_{3}$, while the first equation ($\ell=0$) gives $\mathcal{C}_{1}=\tau_{0} \, \mathcal{C}_{0}/\beta_{0}$. Using this value for $\mathcal{C}_{1}$ and letting $\mathcal{C}_{0}=1$, the second and third equations ($\ell=1,2$)  give $-\tau_{1}\,\beta_{0}+(\beta_{1}-\tau_{0})\, \tau_{0}=0$ and $-\tau_{2}\,\beta_{0}+(\beta_{2}-\tau_{1})\, \tau_{0}=0$ respectively. We can write these conditions compactly as:
\begin{align}
& \ell=0: \quad \mathcal{C}_{1}=\tau_{0}/\beta_{0} \, , \qquad  \quad \ell=1,\,2: \left\{ \begin{array}{l}
\left| \begin{array}{ll}
-\tau_{0} & \beta_{0} \\
-\tau_{1} & \beta_{1}-\tau_{0}
\end{array} \right| = 0 \, , \quad \mbox{(S1)} \qquad 
\left| \begin{array}{ll}
-\tau_{0} & \beta_{0} \\
-\tau_{2} & \beta_{2}-\tau_{1}
\end{array} \right| = 0 \, , \quad \mbox{(S2)}
\end{array} \right.  \notag\\
& \ell=3: \quad \tau_{2}=\beta_{3} \, , \quad \mbox{(NC)} \label{eq5.3}
\end{align}
where (S1) and (S2) refer to the two sufficient conditions and (NC) refers to the necessary condition.\\

For the second-degree $(m=2)$ polynomial solution $\mathcal{P}_{2}(r)= 1+\mathcal{C}_{1}\,r + \mathcal{C}_{2}\,r^{2}$, noting that $\mathcal{C}_{j}=0$, for all $j \neq 0,\,1,\,2,$  where $\ell=0,\,1,\,2,\,3,\,4$ now yields
\begin{align}
& \ell=0,\,1: \quad \left\{ 
\begin{array}{ll}
\mathcal{C}_{1} &= \frac{
\left|\begin{array}{ll} 
\tau_{0} & 2 \, \alpha_{0} \\  
\tau_{1}& 2 \left( \alpha_{1} + \beta_{0} \right)
\end{array}\right|}{\left|\begin{array}{ll} 
\beta_{0} & 2 \, \alpha_{0} \\ 
\beta_{1} - \tau_{0} & 2 \left( \alpha_{1} + \beta_{0} \right)
\end{array}\right|} \, , \qquad
\mathcal{C}_{2} = \frac{
\left|\begin{array}{ll} 
\beta_{0} & \tau_{0} \\
\beta_{1} - \tau_{0} & \tau_{1}
\end{array}\right|}{\left|\begin{array}{ll} 
\beta_{0} & 2 \, \alpha_{0} \\
\beta_{1} - \tau_{0} & 2 \left( \alpha_{1} + \beta_{0} \right)
\end{array}\right|} \, ,
\end{array} \right.\notag\\
& \ell=2,3: \quad \left\{ \begin{array}{ll}
\left|\begin{array}{lll} 
-\tau_{0} & \beta_{0} & 2 \, \alpha_{0} \\
-\tau_{1} & \beta_{1} - \tau_{0} & 2 \left( \alpha_{1} + \beta_{0} \right) \\
-\tau_{2} & \beta_{2} - \tau_{1} & 2 \left( \alpha_{2} + \beta_{1} \right) - \tau_{0}
\end{array}\right|=0 \, , \quad \mbox{(S1)} \qquad \left|\begin{array}{lll} 
-\tau _{0} & \beta_{0} & 2 \, \alpha_{0} \\
-\tau_{1} & \beta_{1}-\tau _{0} & 2 \left(\alpha_{1}+\beta_{0}\right) \\
0 & \beta_{3}-\tau_{2} & 2 \left(\alpha_{3}+\beta_{2}\right)-\tau_{1} \\
\end{array}\right|=0 \, , \quad \mbox{(S2)} \\
\end{array} \right. \notag\\ \notag\\
& \ell=4: \quad  2 \, \alpha_{4} + 2 \, \beta_{3} - \tau_{2} = 0 \, .  \quad \mbox{(NC)} \label{eq5.4}
\end{align}

For the third degree $(m=3)$ polynomial solution $\mathcal{P}_{3}(r)=1+\mathcal{C}_{1}\,r+\mathcal{C}_{2}\,r^{2}+\mathcal{C}_{3}\,r^{3}$, noting that $\mathcal{C}_{j}=0$, for all $j\neq 0,\,1,\,2,\,3$, we have for $\ell=0,\,1,\,2,\,3,\,4,\,5$
 
\begin{align*}
& \ell=0,\,1,\,2: \quad \left\{ 
\begin{array}{ll}
\mathcal{C}_{1} &= \frac{
\left|\begin{array}{lll} 
\tau _{0} & 2 \, \alpha_{0} & 0 \\
\tau_{1} & 2 \left(\alpha_{1}+\beta_{0}\right) & 6 \, \alpha_{0} \\
\tau_{2} & 2 \left(\alpha_{2}+\beta_{1}\right)-\tau _{0} & 3 \left(2 \alpha_{1}+\beta_{0}\right) \\
\end{array}\right|}{\left|\begin{array}{lll} 
\beta_{0} & 2 \, \alpha_{0} & 0 \\
\beta_{1}-\tau _{0} & 2 \left(\alpha_{1}+\beta_{0}\right) & 6 \, \alpha_{0} \\
\beta_{2}-\tau_{1} & 2 \left(\alpha_{2}+\beta_{1}\right)-\tau _{0} & 3 \left(2 \alpha_{1}+\beta_{0}\right) \\
\end{array}\right|} \, , \\
\\
\mathcal{C}_{2} &= \frac{
\left|\begin{array}{lll} 
\beta_{0} & \tau _{0} & 0 \\
\beta_{1}-\tau _{0} & \tau_{1} & 6 \, \alpha_{0} \\
\beta_{2}-\tau_{1} & \tau_{2} & 3 \left(2 \alpha_{1}+\beta_{0}\right) \\
\end{array}\right|}{\left|\begin{array}{lll} 
\beta_{0} & 2 \, \alpha_{0} & 0 \\
\beta_{1}-\tau _{0} & 2 \left(\alpha_{1}+\beta_{0}\right) & 6 \, \alpha_{0} \\
\beta_{2}-\tau_{1} & 2 \left(\alpha_{2}+\beta_{1}\right)-\tau _{0} & 3 \left(2 \alpha_{1}+\beta_{0}\right) \\
\end{array}\right|} \, , \\
\\
\mathcal{C}_{3} &= \frac{
\left|\begin{array}{lll} 
\beta_{0} & 2 \, \alpha_{0} & \tau _{0} \\
\beta_{1}-\tau _{0} & 2 \left(\alpha_{1}+\beta_{0}\right) & \tau_{1} \\
\beta_{2}-\tau_{1} & 2 \left(\alpha_{2}+\beta_{1}\right)-\tau _{0} & \tau_{2} \\
\end{array}\right|}{\left|\begin{array}{lll} 
\beta_{0} & 2 \, \alpha_{0} & 0 \\
\beta_{1}-\tau _{0} & 2 \left(\alpha_{1}+\beta_{0}\right) & 6 \, \alpha_{0} \\
\beta_{2}-\tau_{1} & 2 \left(\alpha_{2}+\beta_{1}\right)-\tau _{0} & 3 \left(2 \alpha_{1}+\beta_{0}\right) \\
\end{array}\right|} \, ,
\end{array} \right.  
\end{align*}
\begin{align}
& \ell=3,\,4: \quad \left\{ \begin{array}{ll}
\left|\begin{array}{llll} 
-\tau _{0} & \beta_{0} & 2 \, \alpha_{0} & 0 \\
-\tau_{1} & \beta_{1}-\tau _{0} & 2 \left(\alpha_{1}+\beta_{0}\right) & 6 \, \alpha_{0} \\
-\tau_{2} & \beta_{2}-\tau_{1} & 2 \left(\alpha_{2}+\beta_{1}\right)-\tau _{0} & 3 \left(2 \, \alpha_{1}+\beta_{0}\right) \\
0 & \beta_{3}-\tau_{2} & 2 \left(\alpha_{3}+\beta_{2}\right)-\tau_{1} & 6 \, \alpha_{2}+3 \, \beta_{1}-\tau _{0} \\
\end{array}\right|=0 \, , \quad \mbox{(S1)} \\
\\
\left|\begin{array}{llll} 
-\tau _{0} & \beta_{0} & 2 \, \alpha_{0} & 0 \\
-\tau_{1} & \beta_{1}-\tau_{0} & 2 \left(\alpha_{1}+\beta_{0}\right) & 6 \, \alpha_{0} \\
-\tau_{2} & \beta_{2}-\tau_{1} & 2 \left(\alpha_{2}+\beta_{1}\right)-\tau_{0} & 3 \left(2 \, \alpha_{1}+\beta_{0}\right) \\
0 & 0 & 2 \left(\alpha_{4}+\beta_{3}\right)-\tau_{2} & 6 \, \alpha_{3}+3 \, \beta_{2}-\tau_{1} \\
\end{array}\right|=0 \, , \quad \mbox{(S2)}\notag\\
\end{array} \right.\\
& \ell=5: \quad 6 \, \alpha_{4} + 3 \, \beta_{3} - \tau_{2}\ = 0 \, . \quad \mbox{(NC)} \label{eq5.5}
\end{align}
Higher-order polynomial solutions may be obtained similarly in this constructive manner.

%%%%%%%%%%%%%%%%%%%%%%%%%%%%%%%%%%%
\subsection{Two-Dimensional Dirac Equation}
%%%%%%%%%%%%%%%%%%%%%%%%%%%%%%%%%%
The general covariant $(2+1)$-dimensional Dirac equation reads
\begin{align}\label{eq5.6}
i\,\gamma^{\mu}(\partial_{\mu}+i\,e\,A_{\mu})\,\Psi(t,\vec{r}) = \mathcal{M}\, \Psi(t,\vec{r}) \, ,
\end{align}
where $\mathcal{M}$ is the rest mass of the particle, $A_{\mu}=(A_{0} \, ,\,\vec{A})$ is the external electromagnetic field potential, and $-e$ is the charge of the particle. Here, $\gamma^{0} = \sigma_{z}$, $\gamma^{1} = i\,\sigma_{x}$, $\gamma^{2} = i\,\sigma_{y}$, where $\sigma_{x}$, $\sigma_{y}$, and $\sigma_{z}$ are the Pauli spin matrices. Using the generalized momentum operator $P_{k} = -i\,\partial_{k} + e\,A_{k}$ for $k = 1,\,2$ and the conventional summation over $\mu$, equation \eqref{eq5.6} may be written in the form
\begin{align}\label{eq5.7}
(i\,\partial_{t} - \sigma_{x}\,P_{y} + \sigma_{y}\,P_{x} - \sigma_{z}\,m -e\,A_{0})\,\Psi(t,\vec{r}) = 0 \, .
\end{align}
Chiang et al. \cite{ch2002} considered an electron moving in a Coulombic field, $A_{0} ={Z\,e}/{r}$, with a constant homogeneous magnetic field described by $A_{1} = -{B\,y}/{2}$ and $A_{2} ={B\,x}/{2}$. With this choice of potential and for $r>0$, \eqref{eq5.7} becomes
\begin{align}\label{eq5.8}
\bigg(i\,\partial_{t} - \bigg[\begin{array}{ll}
\mathcal{M} & -e^{-i \,\phi}(\partial_{r} - \partial_{\phi} + \frac{e\,B\,r}{2}) \\ 
e^{i \,\phi}(\partial_{r} + \partial_{\phi} - \frac{e\,B\,r}{2}) & -\mathcal{M}
\end{array}\bigg] -\frac{Z\,e^{2}}{r}\bigg) \, \Psi(t,\vec{r}) = 0 \, ,
\end{align}
where polar coordinates, the identities $P_{x}\pm i\,P_{y} = -i\,e^{\pm i \,\phi}(\partial_{r} \pm (\partial_{\phi} - ({e\,B\,r}/2)))$, $\partial_{x} = \cos\phi \, \partial_{r} - [\sin(\phi)/r]\, \partial_{\phi}$, and $\partial_{y} = \sin(\phi)\, \partial_{r} + [\cos(\phi)/r]\, \partial_{\phi}$ were employed \cite{ch2002}. Using separation of variables
\begin{align}\label{eq5.9}
\Psi(t,\vec{r}) = \frac{1}{\sqrt{r}}\,e^{-i\,E\,t}\,
\left(\begin{array}{ll}
F(r)\,e^{i\,l \,\phi} \\ 
G(r)\,e^{i\,(l+1)\,\phi}
\end{array} \right)\, ,
\end{align}
where $l \in \mathbb{Z}$, the Dirac radial equation \eqref{eq5.8} reads
\begin{align}
\frac{d\,F(r)}{dr} - \bigg(\frac{l+\frac{1}{2}}{r}+\frac{e\,B}{2}\,r\bigg) \, F(r)+\bigg(E+\mathcal{M}+\frac{Z\,e^{2}}{r}\bigg) \,G(r) &= 0 \, , \label{eq5.10}\\ 
\frac{d\,G(r)}{dr} + \bigg(\frac{l+\frac{1}{2}}{r}+\frac{e\,B}{2}\,r\bigg) \, G(r)-\bigg(E-\mathcal{M}+\frac{Z\,e^{2}}{r}\bigg) \, F(r) &= 0 \, ,  \label{eq5.11}
\end{align}
For a strong magnetic field, using the asymptotic solutions near $r=0$ and $r=\infty$, it is beneficial to assume an ansatz \cite{ch2002}
\begin{align}\label{eq5.12}
F(r) & = r^{\gamma}e^{-e\,B\,r^{2}/4}Q(r) \, , \quad G(r)  = r^{\gamma}e^{-e\,B\,r^{2}/4}P(r) \, ,
\end{align}
where $\gamma = \sqrt{(l+1/2)^{2}-(Z\,e^{2})^{2}}$. Substituting \eqref{eq5.12} into (\ref{eq5.10}-\ref{eq5.11}) yields
\begin{align}
&\frac{dQ(r)}{dr}+\frac{(\gamma-l-\frac{1}{2}-e\,B\,r^{2})}{r}Q(r)+\left(E+\mathcal{M}+\frac{Z\,e^{2}}{r}\right)P(r) = 0 \, , \label{eq5.13}\\
&\frac{dP(r)}{dr}+\frac{(\gamma+l+\frac{1}{2})}{r}P(r)-\left(E-\mathcal{M}+\frac{Z\,e^{2}}{r}\right)Q(r) = 0 \, . \label{eq5.14}
\end{align}
Solving \eqref{eq5.13} for $P(r)$ and substituting the resulting equation into \eqref{eq5.14} forms a second-order differential equation for $Q(r)$
\begin{align}\label{eq5.15}
& r(r+r_{0})\, Q''(r)-(e\,B\,r^{3}+e\,B\,r_{0}\,r^{2}-\big(\beta-1\big)\,r-\beta \,r_{0})\, Q'(r)+((e\,B+\varepsilon)r^{2}+(\varepsilon\, r_{0}+a)\,r+c-a\,r_{0})\, Q(r)=0 \, , \notag \\
& r_{0}\equiv Z\,e^{2}/(E+\mathcal{M}) \, , \quad \beta\equiv 2\,\gamma + 1, 
\quad a\equiv 2\,E\,Z\,e^{2} \, ,\quad\varepsilon\equiv E^{2}-\mathcal{M}^{2}-e\,B\,(l+\frac{5}{2}+\gamma), \quad c\equiv \gamma - l - \frac{1}{2} \, .
\end{align}
Equation \eqref{eq5.15} is an example of differential equations scattered over a vast range of applications in applied mathematics and theoretical physics \cite{ch2014,ch2015,ch2016,ch2002}. The coefficients $\{{\mathcal{C}_{i}}\}_{i=0}^{m}$ of the polynomial solutions $Q_{m}(r)=\sum_{i=0}^{m} \mathcal{C}_{i} \,r^{i}$ satisfy the following recurrence relation \eqref{eq4.5}:
\begin{align}\label{eq5.16}
&\left(E^{2}-\mathcal{M}^{2}- B\,e\, \left(l+\gamma+\ell-\frac{1}{2}\right)\right)\mathcal{C}_{\ell-2}+\left(Z\,e^{2}(3\,E-\mathcal{M})-\frac{B\, e^{3} Z \,(l+\gamma+\ell+\frac{3}{2})}{E+\mathcal{M}}\right)\mathcal{C}_{\ell-1}\notag\\
&+\left((2\,\ell+1)\,\gamma+\ell\, (\ell-1)-l -\frac{a\, e^{2}\, Z}{E+\mathcal{M}}-\frac{1}{2}\right)\mathcal{C}_{\ell}+\frac{e^{2}\,Z\, (1+\ell)\, (1+2\, \gamma+\ell)}{E+\mathcal{M}}\,\mathcal{C}_{\ell+1}=0 \, , \quad \ell=0,\,1,\,\ldots,\, m+2 \, .
\end{align}
From Theorem \ref{Thm4.2}, the necessary condition for the polynomial solutions of \eqref{eq5.15} is
\begin{align}\label{eq5.17}
\varepsilon=e\,B\,(m-1)\qquad \Longrightarrow\qquad E_{m}^{2}=e\,B\, \left(m+l+\gamma+\frac{3}{2}\right)+\mathcal{M}^{2}\, , \quad m=0,\,1,\,\ldots \, .
\end{align}
For the zero-degree polynomial solution $Q_{0}(r)=1$ where $\mathcal{C}_{0}=1$, $m=0$ and therefore $\ell=0,\,1,\,2$, it follows that:
\begin{align}
& \ell=0,\,1: \quad \left\{ \begin{array}{lr}
(1+2 \,l-2\, \gamma)\, (E+\mathcal{M})+4\, e^{4}\, Z^{2}\,E =0 \, ,&\mbox{(S1)}  \\ 
\notag \\
e\, B \left( l+\gamma+\frac{5}{2}\right)-(3 \,E-\mathcal{M}) \,(E+\mathcal{M})=0 \, , &\mbox{(S2)}  \notag
\end{array} \right. \\ 
\notag\\
& \ell=2: \quad  E^{2}=B\, e\, \left(l+\gamma+\frac{3}{2}\right)+ \mathcal{M}^{2} \, . \quad\mbox{(NC)} \label{eq5.18} 
\end{align}
For the first-degree polynomial solution $Q_{1}(r)=1+\mathcal{C}_{1}\, r$ where $\mathcal{C}_{0}=1$ and $\mathcal{C}_{1}\neq 0$, $m=1$ and therefore $\ell=0,\,1,\,2,\,3$, it follows that:
\begin{align}
& \ell=0: \quad  \mathcal{C}_{1}=\dfrac{(1 + 2 l - 2 \gamma)\, (E + \mathcal{M}) + 4\,e^{4} E\, Z^{2}}{2e^{2}\,Z\,(1+2\gamma)} \, , \notag\\ 
& \ell=1,\,2: \quad 
 \left\{ \begin{array}{ll}
e^{2} \,Z \Big(B\, e\, (5 + 2 l + 2 \gamma) - 2 \,(3\, E -\mathcal{M}) \,(E + \mathcal{M}) \Big) & \\
\hskip0.5true in+ 
\Big((1 + 2 \,l - 6\, \gamma) (E + \mathcal{M}) + 4 \,e^{4} \,E\, Z^{2}\Big) \, \mathcal{C}_{1}=0 \, , &\mbox{(S1)} \notag\\ \\
E^{2}- B \,e \left( l+ \gamma+\frac{3}{2}\right)-\mathcal{M}^{2}& \\
+\bigg(e^{2}\,Z\, (3\,E-\mathcal{M})-\dfrac{B \,e^{3} \,Z\, (l+ \gamma+\frac{7}{2})}{E+\mathcal{M}}\bigg) \, \mathcal{C}_{1}=0 \, ,&\mbox{(S2)} \\
\end{array} \right. \notag\\
& \ell=3: \quad  E^{2}=e\, B\, \left(l+\gamma+\frac{5}{2}\right)+ \mathcal{M}^{2} \, . \quad\mbox{(NC)} \label{eq5.19} 
\end{align}
For the second-degree polynomial solution $Q_{2}(r)=1+\mathcal{C}_{1} \, r+\mathcal{C}_{2} \,  r^{2}$ where $\mathcal{C}_{0}=1$ and $\mathcal{C}_{2}\neq 0$, $m=2$ and therefore $\ell=0,\,1,\,2,\,3,\,4$, it follows that:
\begin{align*}
& \ell=0,1:  \left\{ \begin{array}{l} 
\mathcal{C}_{1} = \dfrac{(1 + 2\, l - 2 \,\gamma)\, (E + \mathcal{M}) + 4\, e^{4}\, E\, Z^{2}}{2\, e^{2} (1 + 2\, \gamma)\, Z } \, , \\
\\
\Big(\left(l - 3\, \gamma+\frac{1}{2}\right)\, (E + \mathcal{M}) + 2\, e^{4} \,Z^{2}\,E \Big) \mathcal{C}_{1} - \Big(4 \,e^{2}\,Z \,(1 + \gamma) \Big) \, \mathcal{C}_{2} \\
\hskip0.5true in + e^{2}\,Z \,\Big(B\, e \left( l + \gamma+\frac{5}{2}\right) -(3 \,E -\mathcal{M}) \,(E + \mathcal{M}) \Big) = 0 \, ,
\end{array} \right. 
\end{align*}
\begin{align}
& \ell=2,\,3: 
 \left\{ \begin{array}{ll}
\begin{array}{l} 
(E + \mathcal{M}) \Big( E^{2} -\mathcal{M}^{2} -B\, e \left(l + \gamma+\frac{3}{2}\right) \Big) \mathcal{C}_{0} \\
\quad + e^{2}\,Z\, \Big( (3\,E -\mathcal{M})\,(E + \mathcal{M})- B\, e \left(l +\gamma+\frac{7}{2}\right)  \Big) \, \mathcal{C}_{1} \\
\quad \quad + \Big( \left(\frac{3}{2} -l +5\,\gamma\right) (E +\mathcal  M) - 2 \,e^{4}\, E \,Z^{2} \Big) \, \mathcal{C}_{2}=0 \, ,  \\
\end{array} & \quad \mbox{(S1)}  \\
\\
\begin{array}{l} 
\Big(E^{2}-\mathcal{M}^{2} - B\, e \left( l + \gamma+\frac{5}{2}\right)\Big) \, \mathcal{C}_{1} \\
\quad -\dfrac{e^{2}\,Z \,\Big( B\, e \,( l + \gamma+\frac{9}{2}) - (3\, E -\mathcal{M}) \,(E +\mathcal{M}) \Big)}{ E + \mathcal{M}} \, \mathcal{C}_{2}=0 \, ,
\end{array}  & \quad \mbox{(S2)}  \\
\end{array} \right. \notag \\
& \ell=4: \quad E^{2}= e\, B  \left(l + \gamma+\tfrac{7}{2}\right) +\mathcal{M}^{2} \, . \quad \mbox{(NC)} \label{eq5.20}
\end{align}
Higher-order polynomial solutions follows similarly.

%%%%%%%%%%%%%%%%%%%%%%%%%%%%
\subsection{The Heun Equation}
%%%%%%%%%%%%%%%%%%%%%%%%%%%%
Over the past two decades Heun's equation has attracted considerable attention due to its increasing number of applications in applied mathematics and theoretical physics \cite{ma2007,ex1991,ha2010,ja2008,ma1985,ci2010,ca2014,ov2012,ro1974,ro1995b}. The general second-order Heun differential equation can be written as
\begin{equation} \label{eq5.21}
\big( \alpha_{3} \, r^{3} + \alpha_{2} \, r^{2} + \alpha_{1} \, r \big) \, y''(r) + \big( \beta_{2} \, r^{2} + \beta_{1} \, r + \beta_{0} \big) \, y'(r) - \big( \tau_{1} \, r + \tau_{0} \big) \, y(r) = 0 \, ,
\end{equation}
where the canonical form \cite{ro1995b} may be obtained using the substitutions given in Table \ref{Tab1}. \\
\begin{table}[ht]
\centering
\renewcommand*{\arraystretch}{1.2}
\begin{tabular}{ R | C C C C }
i & \hspace*{1cm} 3 \hspace*{1cm} & \hspace*{1cm} 2 \hspace*{1cm} & \hspace*{1cm} 1 \hspace*{1cm} & \hspace*{1cm} 0 \hspace*{1cm} \\
\hline
\alpha_{i} & 1 & -(1-c) & c & 0 \\
\hline
\beta_{i} & & \delta+\epsilon+\gamma & - \left( c \, ( \delta + \gamma ) + \epsilon + \gamma \right) & \gamma \, c \\
\hline
\tau_{i} & & & - \alpha \, \beta & q \\
\end{tabular}
\caption{Tabulated values for parameters $\alpha_{i}$, $\beta_{i}$, and $\tau_{i}$ which would transform equation \eqref{eq5.21} to the canonical form of the Heun equation.} \label{Tab1}
\renewcommand*{\arraystretch}{1}
\end{table}\\
We immediately see that because $\alpha_{0} = 0$ and $\alpha_{1} \neq 0$ that \eqref{eq5.21} falls under Theorem \ref{Thm4.2}. By equation \eqref{eq4.5} we obtain the three-term recurrence relation
\begin{align}
\big[ \alpha_{3} \, (\ell + s - 2) (\ell + s - 1) + \beta_{2} \, (\ell + s - 1) - \tau_{1} \big] \, \mathcal{C}_{\ell-1} & + \big[ \alpha_{2} \, (\ell + s - 1) (\ell + s) + \beta_{1} \, (\ell + s) - \tau_{0}\big] \, \mathcal{C}_{\ell} \notag \\
& + \big[ \alpha_{1} \, (\ell + s) (\ell + s + 1) + \beta_{0} \, (\ell + s + 1) \big] \, \mathcal{C}_{\ell+1} = 0 \, , \label{eq5.22}
\end{align} 
for $\ell = 0,\,1,\,2,\,\ldots,\,m+1$ and $s \in \big\{ 0 \, , \, 1-(\beta_{0}/\alpha_{1}) \big\}$. We may then input this into the Mathematica\textsuperscript{\textregistered} program or work directly with the ($n+2$) equations generated by the recurrence relations \eqref{eq5.22}. Using either approach with $s=0$ will generate the results, as follows. \\

For the zero-degree ($m=0$) polynomial solution, $\mathcal{P}_{0}(r)=1$, $\ell=0,\,1$. Noting that $\mathcal{C}_{j}=0$ for all $j \neq 0$, it follows for $\ell=0$ that $\tau_{0}=0$, while for $\ell=1$, $\tau_{1}=0$.\\

For the first-degree ($m=1$) polynomial solution, $\mathcal{P}_{1}(r) = \mathcal{C}_{0} + \mathcal{C}_{1} \, r$, $\ell = 0,\,1,\,2$. It follows that
\begin{align} \label{eq5.23}
& \ell=0: \quad \mathcal{C}_{1}=\tau_{0}/\beta_{0} \, , \qquad \ell=1: \quad
 \left| \begin{array}{ll}
-\tau_{0} & \beta_{0} \\
-\tau_{1} & \beta_{1}-\tau_{0}
\end{array} \right| = 0 \, , \quad \mbox{(SC)} \qquad \ell=2: \quad \tau_{1}=\beta_{2} \, . \quad \mbox{(NC)}
\end{align}
where (SC) refers to the sufficient condition and (NC) refers to the necessary condition, and we have set $\mathcal{C}_{0} = 1$ for convenience. \\

For the second-degree $(m=2)$ polynomial solution $\mathcal{P}_{2}(r)= 1+\mathcal{C}_{1}\,r + \mathcal{C}_{2}\,r^{2}$,  $\ell=0,\,1,\,2,\,3$ yields
\begin{align*}
& \ell=0,\,1: \quad \left\{ 
\begin{array}{ll}
\mathcal{C}_{1} &= \frac{
\left|\begin{array}{ll} 
\tau_{0} & 0 \\  
\tau_{1}& 2 \left( \alpha_{1} + \beta_{0} \right)
\end{array}\right|}{\left|\begin{array}{ll} 
\beta_{0} & 0 \\ 
\beta_{1} - \tau_{0} & 2 \left( \alpha_{1} + \beta_{0} \right)
\end{array}\right|} \, , \qquad
\mathcal{C}_{2} = \frac{
\left|\begin{array}{ll} 
\beta_{0} & \tau_{0} \\
\beta_{1} - \tau_{0} & \tau_{1}
\end{array}\right|}{\left|\begin{array}{ll} 
\beta_{0} & 0 \\
\beta_{1} - \tau_{0} & 2 \left( \alpha_{1} + \beta_{0} \right)
\end{array}\right|} \, ,
\end{array} \right. 
 \end{align*}
\begin{align}
& \ell=2: \quad 
\left|\begin{array}{lll} 
-\tau_{0} & \beta_{0} & 0 \\
-\tau_{1} & \beta_{1} - \tau_{0} & 2 \left( \alpha_{1} + \beta_{0} \right) \\
0 & \beta_{2} - \tau_{1} & 2 \left( \alpha_{2} + \beta_{1} \right) - \tau_{0}
\end{array}\right|=0 \, , \quad \mbox{(SC)} \qquad
\ell=3: \quad  2 \, \alpha_{3} + 2 \, \beta_{2} - \tau_{1} = 0 \, . \quad \mbox{(NC)} \label{eq5.24}
\end{align}

For the third degree $(m=3)$ polynomial solution $\mathcal{P}_{3}(r)=1+\mathcal{C}_{1}\,r+\mathcal{C}_{2}\,r^{2}+\mathcal{C}_{3}\,r^{3}$. Noting that $\mathcal{C}_{j}=0,$ for all $j\neq 0,1,2$, we have for $\ell=0,\,1,\,2,\,3,\,4$
 
\begin{align*}
\ell=0,\,1,\,2: \quad \left\{ 
\begin{array}{ll}
\mathcal{C}_{1} &= \frac{
\left|\begin{array}{lll} 
\tau _{0} & 0 & 0 \\
\tau_{1} & 2 \left(\alpha_{1}+\beta_{0}\right) & 0 \\
0 & 2 \left(\alpha_{2}+\beta_{1}\right)-\tau _{0} & 3 \left(2 \alpha_{1}+\beta_{0}\right) \\
\end{array}\right|}{\left|\begin{array}{lll} 
\beta_{0} & 0 & 0 \\
\beta_{1}-\tau _{0} & 2 \left(\alpha_{1}+\beta_{0}\right) & 0 \\
\beta_{2}-\tau_{1} & 2 \left(\alpha_{2}+\beta_{1}\right)-\tau _{0} & 3 \left(2 \alpha_{1}+\beta_{0}\right) \\
\end{array}\right|} \, , \\
\\
\mathcal{C}_{2} &= \frac{
\left|\begin{array}{lll} 
\beta_{0} & \tau _{0} & 0 \\
\beta_{1}-\tau _{0} & \tau_{1} & 0 \\
\beta_{2}-\tau_{1} & 0 & 3 \left(2 \alpha_{1}+\beta_{0}\right) \\
\end{array}\right|}{\left|\begin{array}{lll} 
\beta_{0} & 0 & 0 \\
\beta_{1}-\tau _{0} & 2 \left(\alpha_{1}+\beta_{0}\right) & 0 \\
\beta_{2}-\tau_{1} & 2 \left(\alpha_{2}+\beta_{1}\right)-\tau _{0} & 3 \left(2 \alpha_{1}+\beta_{0}\right) \\
\end{array}\right|} \, , \\
\\
\mathcal{C}_{3} &= \frac{
\left|\begin{array}{lll} 
\beta_{0} & 0 & \tau _{0} \\
\beta_{1}-\tau _{0} & 2 \left(\alpha_{1}+\beta_{0}\right) & \tau_{1} \\
\beta_{2}-\tau_{1} & 2 \left(\alpha_{2}+\beta_{1}\right)-\tau _{0} & 0 \\
\end{array}\right|}{\left|\begin{array}{lll} 
\beta_{0} & 0 & 0 \\
\beta_{1}-\tau _{0} & 2 \left(\alpha_{1}+\beta_{0}\right) & 0 \\
\beta_{2}-\tau_{1} & 2 \left(\alpha_{2}+\beta_{1}\right)-\tau _{0} & 3 \left(2 \alpha_{1}+\beta_{0}\right) \\
\end{array}\right|} \, ,
\end{array} \right.
\end{align*}
\begin{align}
& \ell=3: \quad
\left|\begin{array}{llll} 
-\tau _{0} & \beta_{0} & 0 & 0 \\
-\tau_{1} & \beta_{1}-\tau _{0} & 2 \left(\alpha_{1}+\beta_{0}\right) & 0 \\
0 & \beta_{2}-\tau_{1} & 2 \left(\alpha_{2}+\beta_{1}\right)-\tau _{0} & 3 \left(2 \, \alpha_{1}+\beta_{0}\right) \\
0 & 0 & 2 \left(\alpha_{3}+\beta_{2}\right)-\tau_{1} & 6 \, \alpha_{2}+3 \, \beta_{1}-\tau _{0} \\
\end{array}\right|=0 \, , \quad \mbox{(SC)} \notag \\
\notag \\ 
& \ell=4: \quad 6 \, \alpha_{3} + 3 \, \beta_{2} - \tau_{1}\ = 0 \, . \quad \mbox{(NC)} \label{eq5.25}
\end{align}
This approach allows us to construct all the Heun polynomial solutions in a constructive way that is not available in the literature.

%%%%%%%%%%%%%%%%%%%%%
\section{Conclusions} \label{Sec6}
%%%%%%%%%%%%%%%%%%%%%
In the present work, we provided a simple and constructive approach to find the possible polynomial solutions of the linear differential equation \eqref{eq1.3}, along with the existence conditions. Although there may be other sophisticated approaches available in the literature for solving differential equations \cite{bl2012}, the method presented here is characterized by its constructive approach through recurrence relations. It may be adopted not only to obtain significant research results but also can be used for educational purposes. Aside from the examples discussed in this present work, the authors explored a large number of linear differential equations which appeared in mathematics and physics literature, and have found exact consistency with the results obtained by other researchers \cite{ex1991,ha2010,ja2008,ma1985,ha2011,ha2011b,ci2010,ca2014,ov2012,ro1974}.

\section*{Appendix}
\setcounter{equation}{0}
\renewcommand{\theequation}{A.\arabic{equation}}

\textbf{Proof of Theorem \ref{Thm4.1}:} For $\alpha_{0}\neq 0$ the rational functions $p(r) ={ \sum_{k=0}^{n-1} \beta_{k} \, r^{k} }/{ \sum_{k=0}^{n} \alpha_{k} \, r^{k} } $ and $q(r) = -{ \sum_{k=0}^{n-2} \tau_{k} \, r^{k} }/{ \sum_{k=0}^{n} \alpha_{k} \, r^{k} }$ are analytic at $r=0$. Consequently,  $r=0$ is an ordinary point of the differential equation given by \eqref{eq1.3}
 with a power series solution of the form
\begin{equation}\label{eqA.1}
y(r)=\sum_{\ell =0}^{\infty} \mathcal{C}_{\ell} \, r^{\ell} \, , \quad y'(r)=\sum_{\ell =1}^{\infty} \mathcal{C}_{\ell} \, \ell \, r^{\ell-1} \, , \quad y''(r)=\sum_{\ell=2}^{\infty}  \mathcal{C}_{\ell} \, \ell \, (\ell-1) \, r^{\ell-2} \, .
\end{equation}
Substituting  $y(r)$, $y'(r)$, and $y''(r)$ from \eqref{eqA.1} into \eqref{eq4.1} yields
\begin{equation}\label{eqA.2}
\sum_{k=0}^{n} \sum_{\ell=2}^{\infty} \left( \alpha_{k} \, \mathcal{C}_{\ell} \, \ell \, (\ell-1) \, r^{k+\ell-2} \right) + \sum_{k=0}^{n-1} \sum_{\ell=1}^{\infty} \left( \beta_{k} \, \mathcal{C}_{\ell} \, \ell \, r^{k+\ell-1} \right) - \sum_{k=0}^{n-2}\sum_{\ell=0}^{\infty} \left( \tau_{k} \, \mathcal{C}_{\ell} \, r^{k+\ell} \right) = 0 \, .
\end{equation}
We then shift the $\ell$-summation to arrive at
\begin{equation}\label{eqA.3}
\sum_{k=0}^{n} \sum_{\ell=2}^{\infty} \Big( \alpha_{k} \, \mathcal{C}_{\ell} \, \ell \, (\ell-1) \, r^{k+\ell-2} \Big) + \sum_{k=0}^{n-1} \sum_{\ell=2}^{\infty} \Big( \beta_{k} \, \mathcal{C}_{\ell-1} \, (\ell-1) \, r^{k+\ell-2} \Big) - \sum_{k=0}^{n-2} \sum_{\ell=2}^{\infty} \Big( \tau_{k} \, \mathcal{C}_{\ell-2} \, r^{k+\ell-2} \Big) = 0 \, .
\end{equation}
Now we group these into common degrees of $r$
\begin{align}\label{eqA.4}
\sum_{\ell=2}^{\infty} & \Big[ \alpha_{0} \, \mathcal{C}_{\ell} \, \ell \, (\ell-1) + \beta_{0} \, \mathcal{C}_{\ell-1} \, (\ell-1) - \tau_{0} \, \mathcal{C}_{\ell-2} \Big] \,  r^{\ell-2} \notag \\
&+ \sum_{\ell=2}^{\infty} \Big[ \alpha_{1} \, \mathcal{C}_{\ell} \, \ell \, (\ell-1) + \beta_{1} \, \mathcal{C}_{\ell-1} \, (\ell-1) - \tau_{1} \, \mathcal{C}_{\ell-2} \Big] \, r^{\ell-1} + \sum_{\ell=2}^{\infty} \Big[ \alpha_{2} \, \mathcal{C}_{\ell} \, \ell \, (\ell-1) + \beta_{2} \, \mathcal{C}_{\ell-1} \, (\ell-1) - \tau_{2	} \, \mathcal{C}_{\ell-2} \Big] \, r^{\ell} \notag \\
&+ \cdots + \sum_{\ell=2}^{\infty} \Big[ \alpha_{n-2} \, \mathcal{C}_{\ell} \, \ell \, (\ell-1) + \beta_{n-2} \, \mathcal{C}_{\ell-1} \, (\ell-1) - \tau_{n-2} \, \mathcal{C}_{\ell-2} \Big] \, r^{\ell+n-4} \notag \\
& \qquad + \sum_{\ell=2}^{\infty} \Big[ \alpha_{n-1} \, \mathcal{C}_{\ell} \, \ell \, (\ell-1) + \beta_{n-1} \, \mathcal{C}_{\ell-1} \, (\ell-1) \Big] \, r^{\ell+n-3} + \sum_{\ell=2}^{\infty} \Big[ \alpha_{n} \, \mathcal{C}_{\ell} \, \ell \, (\ell-1) \Big] \, r^{\ell+n-2} = 0 \, .
\end{align}
Then, shifting the individual sums obtain a common degree of $r$ results in
\begin{align}\label{eqA.5}
& \sum_{\ell+n=2}^{\infty} \Big[ \alpha_{0} \, \mathcal{C}_{\ell+n} \, (\ell+n) \, (\ell+n-1) + \beta_{0} \, \mathcal{C}_{\ell+n-1} \, (\ell+n-1) - \tau_{0} \, \mathcal{C}_{\ell+n-2} \Big] \, r^{\ell+n-2} \notag \\
& \qquad + \sum_{\ell+n-1=2}^{\infty} \Big[ \alpha_{1} \, \mathcal{C}_{\ell+n-1} \, (\ell+n-1) \, (\ell+n-2) + \beta_{1} \, \mathcal{C}_{\ell+n-2} \, (\ell+n-2) - \tau_{1} \, \mathcal{C}_{\ell+n-3} \Big] \, r^{\ell+n-2} \notag \\
& \qquad + \sum_{\ell+n-2=2}^{\infty} \Big[ \alpha_{2} \, \mathcal{C}_{\ell+n-2} \, (\ell+n-2) \, (\ell+n-3) + \beta_{2} \, \mathcal{C}_{\ell+n-3} \, (\ell+n-3) - \tau_{2} \, \mathcal{C}_{\ell+n-4} \Big] \, r^{\ell+n-2} \notag \\
+ \cdots + & \sum_{\ell+2=2}^{\infty} \Big[ \alpha_{n-2} \, \mathcal{C}_{\ell+2} \, (\ell+2) \, (\ell+1) + \beta_{n-2} \, \mathcal{C}_{\ell+1} \, (\ell+1) - \tau_{n-2} \, \mathcal{C}_{\ell} \Big] \, r^{\ell+n-2} \notag \\
& \qquad + \sum_{\ell+1=2}^{\infty} \Big[ \alpha_{n-1} \, \mathcal{C}_{\ell+1} \, (\ell+1) \, \ell + \beta_{n-1} \, \mathcal{C}_{\ell} \, \ell \Big] \, r^{\ell+n-2}+ \sum_{\ell=2}^{\infty} \Big[ \alpha_{n} \, \mathcal{C}_{\ell} \, \ell \, (\ell-1) \Big]\,  r^{\ell+n-2} = 0 \, .
\end{align}
To enforce polynomial solutions of degree $m$, we may modify the summations to go to an upper limit of $m$ rather than $\infty$. We wish to shift our bottom index by the degree of the polynomial on the $\tau$ terms, thus a shift of $\ell = \ell-n+2$ is required. The above formula exposes the $(n+1)$-term recurrence relation which may be grouped into common $\mathcal{C}_{j}$ as
\begin{align}\label{eqA.6}
\Big[ \alpha_{0} \, (\ell+2) \, (\ell+1) \Big] \, \mathcal{C}_{\ell+2} &+  \Big[ \alpha_{1} \, (\ell+1) \, \ell + \beta_{0} \, (\ell+1) \Big] \, \mathcal{C}_{\ell+1} +  \Big[ \alpha_{2} \, \ell \, (\ell-1) + \beta_{1} \, \ell - \tau_{0} \Big] \, \mathcal{C}_{\ell} \notag \\
+ & \cdots + \Big[  \alpha_{n} \, (\ell-n+2) \, (\ell-n+1) + \beta_{n-1} \, (\ell-n+2) - \tau_{n-2} \Big] \, \mathcal{C}_{\ell-n+2} = 0 \, .
\end{align}
Finally, the $m^{th}$-degree polynomial solution is given by
\begin{equation}\label{eqA.7}
\sum_{j=0}^{n} \Big[ \alpha_{n-j} \, (\ell-n+j+2) \, (\ell-n+j+1) + \beta_{n-j-1} \, (\ell-n+j+2) - \tau_{n-j-2} \Big] \, \mathcal{C}_{\ell-n+j+2} = 0 \, .
\end{equation} 
where $\ell = 0, \, 1, \, 2, \, \ldots, \, m+n-2$, $n \geq 2$, and $\tau_{i} = \beta_{i} = 0$ if $i<0$.\\

%%%%%%%%%%%
\textbf{Proof of Theorem \ref{Thm4.2}:} For $\alpha_{0}= 0$ and $\alpha_{1}\neq 0$ the rational functions $ p(r) = \sum_{k=0}^{n-1} \beta_{k} \, r^{k}/\sum_{k=1}^{n} \alpha_{k} \, r^{k}$ and $q(r) = - \sum_{k=0}^{n-2} \tau_{k} \, r^{k}/{ \sum_{k=1}^{n} \alpha_{k} \, r^{k} } $,
are non-analytic at $r=0$. We see that 
\begin{equation}\label{eqA.8}
\lim_{r \rightarrow 0} \left[ r \, \frac{\sum_{k=0}^{n-1} \beta_{k} \, r^{k}}{\sum_{k=1}^{n} \alpha_{k} \, r^{k}}\right] = \frac{\beta_{0}}{\alpha_{1}} \, , \qquad \lim_{r \rightarrow 0} \left[- r^{2} \, \frac{\sum_{k=0}^{n-2} \tau_{k} \, r^{k}}{\sum_{k=1}^{n} \alpha_{k} \, r^{k}}\right] = 0 \, .
\end{equation}
Thus the indicial roots are given by: $s(s-1) + ({\beta_{0}}/{\alpha_{1}})s = 0$, so $s \in \big\{ 0 \, , \, 1-({\beta_{0}}/{\alpha_{1}}) \big\}$ and $r=0$ is a regular singular point of the differential equation \eqref{eq4.4}. 
The method of Frobenius assumes a formal series solution of \eqref{eq4.4} of the form
\begin{equation}\label{eqA.9}
y = \sum_{\ell=0}^{\infty} \mathcal{C}_{\ell} \, r^{\ell+s} \, , \qquad y'(r) = \sum_{\ell=0}^{\infty} \mathcal{C}_{\ell} \, (\ell+s) \, r^{\ell+s-1} \, , \qquad y''(r) = \sum_{\ell=0}^{\infty} \mathcal{C}_{\ell} \, (\ell+s)(\ell+s-1) \, r^{\ell+s-2} \, .
\end{equation}
Substituting  $y(r)$, $y'(r)$, and $y''(r)$ from \eqref{eqA.9} into \eqref{eq4.4} yields
\begin{equation}\label{eqA.10}
\sum_{k=1}^{n} \sum_{\ell=0}^{\infty} \alpha_{k} \, \mathcal{C}_{\ell} \, (\ell+s) (\ell+s-1) \, r^{k+\ell+s-2} + \sum_{k=0}^{n-1} \sum_{\ell=0}^{\infty} \beta_{k} \, \mathcal{C}_{\ell} \, (\ell+s) \, r^{k+\ell+s-1} 
- \sum_{k=0}^{n-2} \sum_{\ell=0}^{\infty} \tau_{k} \, \mathcal{C}_{\ell} \, r^{k+\ell+s} = 0 \, .
\end{equation}
We then group these into a common degree of $r$:
\begin{align} \label{eqA.11}
\sum_{\ell=0}^{\infty}& \Big[ \alpha_{1} \, \mathcal{C}_{\ell} \, (\ell+s) (\ell+s-1) + \beta_{0} \, \mathcal{C}_{\ell} \, (\ell+s) \Big] \, r^{\ell+s-1} + \sum_{\ell=0}^{\infty} \Big[ \alpha_{2} \, \mathcal{C}_{\ell} \, (\ell+s) (\ell+s-1) + \beta_{1} \, \mathcal{C}_{\ell} \, (\ell+s) - \tau_{0} \, \mathcal{C}_{\ell} \Big] \, r^{\ell+s} \notag\\
+ & \cdots +  \sum_{\ell=0}^{\infty} \Big[ \alpha_{n} \, \mathcal{C}_{\ell} \, (\ell+s) (\ell+s-1) + \beta_{n-1} \, \mathcal{C}_{\ell} \, (\ell+s) - \tau_{n-2} \, \mathcal{C}_{\ell} \Big] \, r^{n+\ell+s-2} = 0 \, .
\end{align}
Now we shift the $\ell$-summation to obtain $r^{n+\ell+s-2}$ in each term
\begin{align}\label{eqA.12}
 \sum_{n+\ell-1=0}^{\infty}& \Big[ \alpha_{1} \, \mathcal{C}_{n+\ell-1} \, (n+\ell+s-1) (n+\ell+s-2) + \beta_{0} \, \mathcal{C}_{n+\ell-1} \, (n+\ell+s-1) \Big] \, r^{n+\ell+s-2} \notag\\
 &+ \sum_{n+\ell-2=0}^{\infty} \Big[ \alpha_{2} \, \mathcal{C}_{n+\ell-2} \, (n+\ell+s-2) (n+\ell+s-3) + \beta_{1} \, \mathcal{C}_{n+\ell-2} \, (n+\ell+s-2) - \tau_{0} \, \mathcal{C}_{\ell} \Big] \, r^{n+\ell+s-2} \notag\\
&+  \cdots + \sum_{\ell=0}^{\infty} \Big[ \alpha_{n} \, \mathcal{C}_{\ell} \, (\ell+s) (\ell+s-1) + \beta_{n-1} \, \mathcal{C}_{\ell} \, (\ell+s) - \tau_{n-2} \, \mathcal{C}_{n+\ell-2} \Big] \, r^{n+\ell+s-2} = 0 \, .
\end{align}
To enforce polynomial solutions of degree $m$, we may modify the summations to go to an upper limit of $m$ rather than $\infty$. We wish to shift our bottom index by the degree of the polynomial on the $\tau$ terms, thus a shift of $\ell = \ell-n+2$ is required. The above formula exposes the $n$-term recurrence relation which may be grouped into common $\mathcal{C}_{j}$ as
\begin{align}\label{eqA.13}
\Big[ \alpha_{1} \, (\ell+s+1) \, (\ell+s) &+ \beta_{0} \, (\ell+s+1) \Big] \, \mathcal{C}_{\ell+1}+  \Big[ \alpha_{2} \, (\ell+s) (\ell+s-1) + \beta_{1} \, (\ell+s) - \tau_{0} \Big] \, \mathcal{C}_{\ell} \notag \\
&+  \cdots +  \Big[  \alpha_{n} \, (\ell-n+s+2) \, (\ell-n+s+1) + \beta_{n-1} \, (\ell-n+s+2) - \tau_{n-2} \Big] \, \mathcal{C}_{\ell-n-2} = 0 \, .
\end{align}
Thus, the $m^{th}$-degree polynomial solution is given by
\begin{equation}\label{eqA.14}
\sum_{j=0}^{n-1} \Big[ \alpha_{n-j} \, (\ell-n+j+s+2) (\ell-n+j+s+1) + \beta_{n-j-1} \, (\ell-n+j+s+2) - \tau_{n-j-2} \Big] \, \mathcal{C}_{\ell-n+j+2} = 0 \, .
\end{equation} 
where $\ell = 0, \, 1, \, 2, \, \ldots, \, m+n-2$, $n \geq 2$, and $\tau_{i} = 0$ if $i<0$.

%%%%%%%%%%%%%%%%%%%%%%%%%%%%% 
\section*{Acknowledgments}
%%%%%%%%%%%%%%%%%%%%%%%%%%%%%
Partial financial support of this work, under Grant No. GP249507 from the Natural Sciences and Engineering Research Council of Canada, is gratefully acknowledged. 

\bibliographystyle{elsarticle-harv}

\begin{thebibliography}{30}
\bibitem{ra2007} A. P. Raposo, H. J.  Weber, D. E.  Alvarez-Castillo M. Kirchbach, \emph{Romanovski polynomials in selected physics problems}, Centr. Eur. J. Phys. \textbf{5} (2007) 253 - 284. https://doi.org/10.2478/s11534-007-0018-5
\bibitem{ov2012} E. Ovsiyuk, M. Amirfachrian, O. Veko, \emph{On Schr\"{o}dinger equation with potential $V(r) = \alpha\,r^{-1} + \beta\,r + k\,r^{2}$ and the bi-confluent Heun functions theory}, Nonl. Phen. Compl. Sys. \textbf{15}, 163 (2012),
\bibitem{ca2014} F. Caruso, J. Martins, V. Oguri, \emph{Solving a two-electron quantum dot model in terms of polynomial solutions of a biconfluent Heun equation}, Ann. Phys. \textbf{347}, 130 (2014) 130-140. 
\bibitem{ma1985} G. Marcilhacy, R. Pons, \emph{The Schr\"{o}dinger equation for the interaction potential $x^{2} + \lambda \, x^{2} / (1+g\,x^{2})$, and the first Heun confluent equation}, J. Phys. A: Math. Gen. \textbf{18} (1985) 2441-2449.
\bibitem{ex1991} H. Exton, \emph{The interaction $V(r) = - Z \, e^{2} / (r + \beta)$ and the confluent Heun equation}, J. Phys. A: Math. Gen. \textbf{24} (1991) L329 - L330.
\bibitem{ci2010} H. Ciftci, R. L. Hall, N. Saad, E. Dogu, \emph{Physical applications of second-order linear differential equations that admit polynomial solutions}, J. Phys. A: Math. Theor. \textbf{43} (2010) 415206.
\bibitem{ha2010} R. L. Hall, N. Saad, K. D. Sen, \emph{Soft-core Coulomb potentials and Heun's differential equation}, J. Math. Phys. \textbf{51} (2010) 022107. 
\bibitem{ha2011} R. L. Hall, N. Saad, K. D. Sen, \emph{Discrete spectra for confined and unconfined $-a/r + b\,r^{2}$ potentials in d-dimensions}, J. Math. Phys. \textbf{52} (2011) 092103.
\bibitem{ha2011b} R. L. Hall, N. Saad, K. D. Sen, \emph{Spectral characteristics for a spherically confined $-a/r + b\,r^{2}$ potential}, J. Phys. A: Math. Theor. \textbf{44} (2011) 185307.
\bibitem{mh2011} M. Hortacsu, \emph{Heun functions and their uses in physics}, arXiv: 1101.0471v1.
\bibitem{zh2012} Yao-Zhong Zhang, \emph{Exact polynomial solutions of second order differential equations and their applications,} J. Phys. A: Math. Theor. \textbf{45} (2012) 065206.
\bibitem{sa2014} N. Saad, R. L. Hall, V. Trenton, \emph{Polynomial solutions for a class of second-order linear differential equations}, Appl. Math. Comput. \textbf{226} (2014) 615 - 634. 
\bibitem{wk2014} Wen Fa-Kai  , Yang Zhan-Ying , Liu Chong , Yang Wen-Li  and Zhang Yao-Zhong, \emph{Exact polynomial solutions of Schr\"odinger equation with various hyperbolic potentials},  Commun. Theor. Phys. \textbf{61} (2014) 153.
\bibitem{qd2014}  Dong-Sheng Sun, Yuan You, Fa-Lin Lu, Chang-Yuan Chen and Shi-Hai Dong, \emph{The quantum characteristics of a class of complicated double ring-shaped non-central potential,} \textbf{89} (2014) 045002.
\bibitem{ch2002} C. Chiang, C. Ho, \emph{Planar Dirac electron in Coulomb and magnetic fields: a Bethe ansatz approach}, J. Math. Phys \textbf{43} (2002) 43 - 51.
\bibitem{ch2014} C. Y. Chen, F. L. Lu, D. S. Sun, S. H. Dong, \emph{The origin and mathematical characteristics of the Super-Universal Associated-Legendre polynomials,}  Commun. Theor. Phys. (Beijing) \textbf{62} (2014)  331 - 337.
\bibitem{ch2015} C. Y. Chen, Y. You, F. L. Lu, D. S. Sun, S. H. Dong, \emph{Exact solutions to a class of differential equation and some new mathematical properties for the universal associated-Legendre polynomials}, Appl. Math. Lett. \textbf{40} (2015) 90-96.
\bibitem{ch2016} C. Y. Chen, F. L. Lu, D. S. Sun, Y. You, S. H. Dong, \, \emph{Spin-orbit interaction for the double ring-shaped oscillator}, Ann. Physics: Elsevier  \textbf{371}  (2016) 183 - 198. 
\bibitem{qd2016}  Shishan Dong, Guo-Hua Sun, B. J. Falaye, Shi-Hai Dong,\emph{Semi-exact solutions to position-dependent mass Schrödinger problem with a class of hyperbolic potential $V_{0}\tanh(ax)$},  European Physical Journal Plus \textbf{131} (2016) 176.
\bibitem{ish2015} A. Ishkhanyan, \emph{Exact solution of the Schr\"odinger equation for the inverse square root potential}, EPL (Europhysics Letters) \textbf{112} (2015) 10006.
\bibitem{li2016} W. Li, W. Dai, \emph{Exact solution of inverse-square-root potential $V(x)=-\alpha/\sqrt{r}$}, Ann. Phys. {\bf 373} (2016) 207 - 215. 
\bibitem{fe2017} F. M. Fern\'{a}ndez, \emph{Comment on: Exact solution of the inverse-square-root potential $V(r)=-\alpha/\sqrt{r}$,} Ann. Phys. {\bf 379} (2017) 83 - 85. 
\bibitem{ish2018} A. M. Ishkhanyan, \emph{Schr\"odinger potentials solvable in terms of the general Heun functions}, Ann. Phys. {\bf 388} (2018) 456 - 471. 
\bibitem{ish2018b} A. M. Ishkhanyan, \emph{Exact solution of the Schr\"odinger equation for a short-range exponential potential with inverse square root singularity},
Eur. Phys. J. Plus 133 (2018) 83.
\bibitem{mf2018} F. Maiz, M. M.  Alqahtani, N. Al Sdran and I.  Ghnaim, \emph{Sextic and decatic anharmonic oscillator potentials: polynomial solutions,} Physica B \textbf{530} (2018) 101-5.
\bibitem{qd2018} S. Dong, Q. Dong, Guo-Hua Sun, S. Femmam, S. H. Dong, \emph{Exact solutions of the Razavy Cosine Type potential}, Advances in High Energy Physics (2018), Article ID 5824271.
\bibitem{qd2018a} Q. Dong, F. A. Serrano, Guo-Hua Sun, J. Jing, S. H. Dong, \emph{Semiexact Solutions of the Razavy Potential}, Advances in High Energy Physics (2018), Article ID ID 9105825.
\bibitem{qd2019} Qian Dong, Shi-Shan Dong, Eduardo Hernández-M\'{a}rquez, Ram\'{o}n Silva-Ortigoza, Guo-Hua Sun and Shi-Hai Dong, \emph{Semi-exact Solutions of Konwent Potential,}  Commun. Theor. Phys. \textbf{71} (2019) 231.
\bibitem{qd2019a} Qian Dong, Guo-Hua Sun, Jian Jing, Shi-Hai Dong, \emph{New findings for two new type sine hyperbolic potentials,} Physics Letters A \textbf{383} (2019)  270.  
\bibitem{eu1769} L. Euler, \emph{Institutiones calculi integralis,} Vol. 2, Petersburg: Acad. Imp. (1769), Reprinted in Opera Omnia, Series I volume 12, Birkh\"auser, 1913. Commentationes Societiones Regiae Scientiarum Gottingensis Recentiores, Vol. II. Reprinted in Gesammelte Werke, Bd. 3, pp. 123-163 and 207-229, 1866.
\bibitem{sc1941} H. Scheff\'{e}, \emph{Linear differential equations with two-term recurrence formulas,} Journal of Mathematics and Physics \textbf{21} (1941) 240-249.
\bibitem{ss1964} S. D. Shore, \emph{On the second order differential equation which has orthogonal polynomial solutions,} Bull. Calcutta Math. Soc. \textbf{56} (1964), 195?198. 
\bibitem{ha1978} W. Hahn, 1978, \emph{On differential equations for orthogonal polynomials},  Funkcialaj Ekuacioj \textbf{21} (1978) 1- 9. 
\bibitem{li1982} L. Littlejohn and S. Shore (1982). \emph{Nonclassical orthogonal polynomials as solutions to second-order differential equations,} Canadian Mathematical Bulletin, \textbf{25} (1982) 291-295. 
\bibitem{ll1984} L. Littlejohn, \emph{On the classification of differential equations having orthogonal polynomial solutions,} Ann. Mat. Pura Appl., \textbf{138} (1984), pp. 35-53.
\bibitem{sa1990} W. A. Al-Salam, \emph{Characterization theorems for orthogonal polynomials}, in \emph{Orthogonal polynomials: Theory and practice}, N. Paul (Ed.) Springer Netherlands (1990).
\bibitem{at1995} N. M. Atakishiyev, M. Rahman, S. K. Suslov, \emph{On classical orthogonal polynomials.} Constr. Approx. \textbf{11} (1995) 181 - 226. https://doi.org/10.1007/BF01203415
\bibitem{ro1995b} A. Ronveaux, \emph{Heun's Differential Equation}, Oxford University Press, 1995.
\bibitem{ma2007} R. S. Maier, \emph{The 192 solutions of the Heun equation}, Math. Comp. \textbf{76} (2007)  811 - 843.
\bibitem{ja2008} L. J. El-Jaick, D. B. Bartolomeu, D. B. Figueiredo, \emph{On certain solutions for confluent and double-confluent Heun equations}, J. Math. Phys. \textbf{49} (2008) 082508.
\bibitem{kl1998} K. H.  Kwon, L. L. Littlejohn, \emph{Sobolev orthogonal polynomials and second-order differential equations,} Rocky Mountain J. Math., \textbf{28} (1998), pp. 547-594.
\bibitem{ek2001} W. N. Everitt, K.H. Kwon, L.L. Littlejohn, R. Wellman, \emph{Orthogonal polynomial solutions of linear ordinary differential equations}, J. Comput. Appl. Math. \textbf{133} (2001) 85 - 109.
\bibitem{bl2012} G. Bluman, R. Dridi, \emph{New solutions for ordinary differential equations}, J. Symb. Comp. \textbf{47} (2012) 76-88
\bibitem{ro1884} E. J. Routh, \emph{On some properties of certain solutions of a differential equation of the second order}, Proc. London Math. Soc.1(1884) 245-262.
\bibitem{bo1929} S. Bochner, \emph{\"Uber Sturm-Liouvillesche Polynomsysteme}, Math. Z. \textbf{29} (1929) 730 - 736.
\bibitem{sw2000} S. Y. Slavyanov and W. Lay, \emph{Special Functions, A Unified Theory Based on Singularities}, Oxford Mathematical Monographs (2000).
\bibitem{in1956} E. L. Ince, \emph{Ordinary differential equations}, Dover Publications (1956).
\bibitem{fo1933} A. R. Forsyth, \emph{A treatise on differential equations}, $6^{th}$ Ed.,  Macmillan: London (1933).
\bibitem{sa2003} H. Ciftci, R. L.  Hall, and N. Saad, \emph{Asymptotic iteration method for eigenvalue problems}, J. Phys. A:Math. Gen. \textbf{36} (2003) 11807-11816.
\bibitem{sa2005} N. Saad, R. L. Hall, and H. Ciftci, \emph{Criterion for polynomial solutions to a class of linear differential equations of second order} 
J. Phys. A: Math. Gen. \textbf{38} (2005) 1147.
\bibitem{shi2011} Shi-Hai Dong, \emph{Wave equations in higher dimensions}, Springer (2011).
\bibitem{ro1974} J. Rovder, \emph{Zeros of the polynomial solutions of the differential equation $x\,y'' + (\beta_{0} + \beta_{1}\,x + \beta_{2}\,x^{2})\,y' + (\gamma-n\,\beta_{2}\,x)\,y=0$}, Mat. C\u{a}s. \textbf{24} (1974) 15.



\end{thebibliography}
% Include the ".bib" file (generated by bibtex) right here.

\end{document}